\documentclass[12pt,dvips]{amsart}
\usepackage{amsfonts, amssymb, latexsym, epsfig}
\usepackage{euler, amsfonts, amssymb, latexsym, epsfig, epic}
\setlength{\oddsidemargin}{0in}
\setlength{\evensidemargin}{0in}
\setlength{\marginparwidth}{0in}
\setlength{\marginparsep}{0in}
\setlength{\marginparpush}{0in}
\setlength{\topmargin}{0in}
\setlength{\headheight}{0pt}
\setlength{\headsep}{0pt}
\setlength{\footskip}{.3in}
\setlength{\textheight}{9.2in}
\setlength{\textwidth}{6.5in}
\setlength{\parskip}{4pt}

\newtheorem{Theorem}{Theorem}[section]

\newtheorem{Lemma}[Theorem]{Lemma}

\newtheorem{Corollary}[Theorem]{Corollary}

\newtheorem{Definition-Proposition}[Theorem]{Definition-Theorem}
\newtheorem{Main Conjecture}[Theorem]{Main Conjecture}

\theoremstyle{remark}
\newtheorem{Example}[Theorem]{Example}


\newcommand\LIS{{\tt LIS}}

\newcommand\tLambda{\widetilde\Lambda}
\newcommand\trLambda{\Lambda^\dagger}
\newcommand\trlambda{\lambda^\dagger}
\newcommand\tlambda{\widetilde\lambda}
\newcommand\tT{\widetilde T}
\newcommand\trT{T^\dagger}

\newcommand\trnu{\nu^\dagger}
\newcommand\tnu{\widetilde\nu}
\newcommand\tmu{\widetilde\mu}

\theoremstyle{plain}





%

\newcommand{\cellsize}{15}
\newlength{\cellsz} \setlength{\cellsz}{\cellsize\unitlength}
\newsavebox{\cell}
\sbox{\cell}{\begin{picture}(\cellsize,\cellsize)
\put(0,0){\line(1,0){\cellsize}}
\put(0,0){\line(0,1){\cellsize}}
\put(\cellsize,0){\line(0,1){\cellsize}}
\put(0,\cellsize){\line(1,0){\cellsize}}
\end{picture}}
\newcommand\cellify[1]{\def\thearg{#1}\def\nothing{}%
\ifx\thearg\nothing
\vrule width0pt height\cellsz depth0pt\else
\hbox to 0pt{\usebox{\cell} \hss}\fi%
\vbox to \cellsz{
\vss
\hbox to \cellsz{\hss$#1$\hss}
\vss}}
\newcommand\tableau[1]{\vtop{\let\\\cr
\baselineskip -16000pt \lineskiplimit 16000pt \lineskip 0pt
\ialign{&\cellify{##}\cr#1\crcr}}}
%

\newcommand{\kellsize}{30}
\newlength{\kellsz} \setlength{\kellsz}{\kellsize\unitlength}
\newsavebox{\kell}
\sbox{\kell}{\begin{picture}(\kellsize,\kellsize)
\put(0,0){\line(1,0){\kellsize}}
\put(0,0){\line(0,1){\kellsize}}
\put(\kellsize,0){\line(0,1){\kellsize}}
\put(0,\kellsize){\line(1,0){\kellsize}}
\end{picture}}
\newcommand\kellify[1]{\def\thearg{#1}\def\nothing{}%
\ifx\thearg\nothing
\vrule width0pt height\kellsz depth0pt\else
\hbox to 0pt{\usebox{\kell} \hss}\fi%
\vbox to \kellsz{
\vss
\hbox to \kellsz{\hss$#1$\hss}
\vss}}
\newcommand\ktableau[1]{\vtop{\let\\\cr
\baselineskip -16000pt \lineskiplimit 16000pt \lineskip 0pt
\ialign{&\kellify{##}\cr#1\crcr}}}

\newcommand{\sellsize}{36}
\newlength{\sellsz} \setlength{\sellsz}{\sellsize\unitlength}
\newsavebox{\sell}
\sbox{\sell}{\begin{picture}(\sellsize,20)
\put(0,0){\line(1,0){\sellsize}}
\put(0,0){\line(0,1){\sellsize}}
\put(\sellsize,0){\line(0,1){\sellsize}}
\put(0,\sellsize){\line(1,0){\sellsize}}
\end{picture}}
\newcommand\sellify[1]{\def\thearg{#1}\def\nothing{}%
\ifx\thearg\nothing
\vrule width0pt height\sellsz depth0pt\else
\hbox to 0pt{\usebox{\sell} \hss}\fi%
\vbox to \sellsz{
\vss
\hbox to \sellsz{\hss$#1$\hss}
\vss}}
\newcommand\stableau[1]{\vtop{\let\\\cr
\baselineskip -16000pt \lineskiplimit 16000pt \lineskip 0pt
\ialign{&\sellify{##}\cr#1\crcr}}}

\newcommand{\smellsize}{7}
\newlength{\smellsz} \setlength{\smellsz}{\smellsize\unitlength}
\newsavebox{\smell}
\sbox{\smell}{\begin{picture}(\smellsize,\smellsize)
\put(0,0){\line(1,0){\smellsize}}
\put(0,0){\line(0,1){\smellsize}}
\put(\smellsize,0){\line(0,1){\smellsize}}
\put(0,\smellsize){\line(1,0){\smellsize}}
\end{picture}}
\newcommand\smellify[1]{\def\thearg{#1}\def\nothing{}%
\ifx\thearg\nothing
\vrule width0pt height\smellsz depth0pt\else
\hbox to 0pt{\usebox{\smell} \hss}\fi%
\vbox to \smellsz{
\vss
\hbox to \smellsz{\hss$#1$\hss}
\vss}}
\newcommand\smtableau[1]{\vtop{\let\\\cr
\baselineskip -16000pt \lineskiplimit 16000pt \lineskip 0pt
\ialign{&\smellify{##}\cr#1\crcr}}}

\begin{document}
\pagestyle{plain}

\mbox{}
\title{$K$-theoretic Schubert calculus
for $OG(n,2n+1)$ and\\ Jeu de taquin for shifted increasing tableaux}

\author{Edward Clifford}
\address{Department of Mathematics, University of Maryland,
College Park, MD 20742, USA}
\email{ecliff@math.umd.edu}

\author{Hugh Thomas}
\address{Department of Mathematics and Statistics, University of New
Brunswick, Fredericton, New Brunswick, E3B 5A3, Canada}
\email{hugh@math.unb.ca}

\author{Alexander Yong}
\address{Department of Mathematics, University of Illinois at
Urbana-Champaign, Urbana, IL 61801, USA}
\keywords{Schubert calculus, $K$-theory,
orthogonal Grassmannians, {\it jeu de taquin} for increasing tableaux}
\email{ayong@math.uiuc.edu}
\date{August 14, 2010}

\begin{abstract}
We present a proof of a Littlewood-Richardson rule for the $K$-theory of
odd orthogonal Grassmannians $OG(n,2n+1)$, as conjectured in [Thomas-Yong '09].
Specifically, we prove that rectification using the {\it jeu de taquin} for increasing shifted tableaux
introduced there, is well-defined and gives rise to an
associative product. Recently, [Buch-Ravikumar '09] proved
a Pieri rule for $OG(n,2n+1)$ that
confirms a special
case of the conjecture. Together, these results imply the aforementioned
conjecture.
\end{abstract}
\maketitle

\vspace{-.2in}
\section{Introduction}
This paper concerns the {\it jeu de taquin} theory for shifted increasing tableaux
introduced in \cite{TY5}. Utilizing recent work of A.~Buch and V.~Ravikumar \cite{Bu}, we present a
proof of a rule, for the $K$-theoretic Schubert calculus of maximal isotropic odd orthogonal Grassmannians,
conjectured in \cite{TY5}.

\subsection{Main results}
Let $X=OG(n,2n+1)$ denote the Grassmannian of isotropic
$n$-dimensional planes in ${\mathbb C}^{2n+1}$, with respect to a symmetric, non-degenerate bilinear form.
The Schubert varieties $X_{\lambda}$ inside $X$ are indexed by partitions $\lambda$ with distinct parts and maximum part-size at most $n$.
Such a partition is identified with its
{\bf shifted Young diagram}, obtained from
the Young diagram for $\lambda$ by indenting the $i$-th row by $i-1$ columns.
Let $\Lambda$ denote the {\bf shifted staircase} associated to the partition $(n,n-1,\dots,1)$.
For example,
\begin{figure}[h]
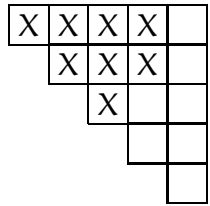

\[\tableau{{X}&{X }&{ X }&{ X }&{\ }\\&{X }&{X }&{ X }&{\ }\\&&{X }&{\ }&{\ }\\&&&{\ }&{\ }\\&&&&{\ }}\]
\caption{$\lambda=(4,3,1)\subset \Lambda$; $n=5$}
\end{figure}

The Grothendieck ring $K^{0}(X)$ of algebraic vector bundles over $X$ has a basis of Schubert structure
sheaves $[{\mathcal O}_{X_{\lambda}}]$ with which one defines the {\bf Schubert structure constants}:
\[[{\mathcal O}_{X_{\lambda}}] [{\mathcal O}_{X_{\mu}}]=\sum_{\nu}C_{\lambda,\mu}^{\nu}[{\mathcal O}_{X_{\nu}}].\]
Here the sum is over all shifted Young diagrams
$\nu$ with $|\nu|\geq |\lambda|+|\mu|$, where $|\lambda|=\lambda_1+\lambda_2+\cdots$.
When the equality
$|\nu|=|\lambda|+|\mu|$ holds, the numbers $C^\nu_{\lambda\mu}$ equal the classical Schubert structure constants for the cohomology
ring $H^{\star}(X,{\mathbb Q})$. The latter combinatorics is modeled by the multiplication of Schur $Q$-polynomials,
a class of symmetric functions. Combinatorial rules for the multiplication
of Schur $Q$-functions were found by
D.~Worley \cite{Worley} and J.~Stembridge \cite{Stembridge}; see also the textbook
by P.~Hoffman and J.~Humphreys \cite{Hoffman}. The connection to geometry was made by
H.~Hiller and B.~Boe \cite{Hiller.Boe} and P.~Pragacz \cite{Pragacz}. In view
of this, and the work on $K$-theoretic Littlewood-Richardson rules for ordinary Grassmannians (see, e.g.,
\cite{Buch:KLR, TY5} and the references therein),
it is a natural task to seek a rule for the more general $C_{\lambda,\mu}^{\nu}$. In \cite{TY5}, the first conjectural solution to this problem
was presented, and a proof strategy was outlined.

If $\lambda\subseteq \nu$, then the {\bf shifted skew shape} $\nu/\lambda$
consists of the boxes of
$\nu$ not in $\lambda$. A shape $\nu=\nu/\emptyset$ shall be referred to as {\bf straight}, in analogy with the
terminology for ordinary Young diagrams.
A {\bf shifted increasing tableau} of shape $\nu/\lambda$ is a filling of each of the boxes of
$\nu/\lambda$ by a nonnegative
integer such that each row and column is strictly increasing. In particular, a filling of a shape $\lambda$ is called
{\bf superstandard} if the first row consists of $1,2,\dots,\lambda_1$, the
second row consists of $\lambda_1+1,\dots,\lambda_1+\lambda_2$, etc. Denote this tableau by $S_\lambda$.
Let ${\tt INC}(\nu/\lambda)$ denote the set of all shifted increasing tableaux of shape $\nu/\lambda$.

\begin{figure}[h]
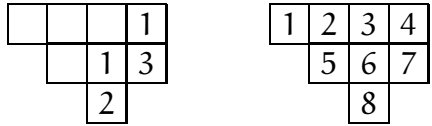

\[\tableau{{\ }&{\ }&{ \ }&{ 1 }\\&{\ }&{1 }&{3}\\&&{2 }} \ \ \ \ \ \ \ \ \ \
\tableau{{1 }&{2 }&{ 3 }&{ 4 }\\&{5 }&{6 }&{7}\\&&{8 }}
\]
\caption{A skew increasing tableau of shape $(4,3,1)/(3,1)$ and a superstandard tableau of shape
$(4,3,1)$}
\end{figure}
In \cite{TY5}, {\it jeu de taquin} and $K$-rectification
for shifted increasing fillings were introduced,
as part of a more general initiative towards $K$-theoretic Schubert calculus for minuscule $G/P$'s.
These notions extend ideas developed in \cite{TY2}, which in turn can be traced
back to the foundational work of
D.~Worley \cite{Worley}, M.-P.~Sch\"{u}tzenberger \cite{Schut}, and many others. This is reviewed in Section~2.

A $K$-rectification (or simply, rectification) of an increasing tableau $T$ is a tableau that results
from repeatedly applying {\it jeu de taquin} starting with $T$
until one reaches a straight shape tableau.
As in \cite{TY5}, in general, a tableau will not have a unique $K$-rectification.
However, we have the following main theorem, which was stated in
\cite[Section 7]{TY5}
as a conjectural
analogue
of~\cite[Theorem~1.2]{TY5}:
\begin{Theorem}\label{thm:main}
If $T$ is a shifted increasing tableau that rectifies to a superstandard
tableau $S_{\mu}$ using one $K$-rectification order, then $T$ rectifies to
$S_{\mu}$ using any $K$-rectification order.
\end{Theorem}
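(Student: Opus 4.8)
The plan is to prove Theorem~\ref{thm:main} by extracting from a shifted increasing tableau a combinatorial invariant that (i) depends on no choices, (ii) is preserved by jeu de taquin, and (iii) singles out the superstandard tableaux uniquely. I would take as the invariant the \emph{shifted $K$-theoretic Knuth equivalence class} of the row reading word $w(T)$ of $T$ (read left to right within rows, from the bottom row up). Here ``shifted $K$-Knuth equivalence'' is the equivalence on words generated by the shifted plactic-type relations appropriate to shifted increasing tableaux (the shifted analogues of the Worley--Haiman relations, including the special relations at the main diagonal) together with the $K$-theoretic idempotency relations that permit repeating or unrepeating a letter. The governing idea is that, although two $K$-rectification orders genuinely can produce different straight-shape increasing tableaux in general, they cannot alter this class.

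The first step is to show that a single shifted $K$-jeu de taquin slide alters $w(T)$ only within its shifted $K$-Knuth class; since a $K$-rectification is a composition of slides, $w(\mathrm{rect}(T))$ then lies in the shifted $K$-Knuth class of $w(T)$ whichever $K$-rectification order is used. This is a bounded-window verification based on the description of slides recalled in Section~2. The extra work beyond the classical case is that during a slide an empty inner corner may \emph{split} into two cells, or two empty cells may \emph{merge}, so the reading-word relations must be checked to survive these $K$-moves as well as the ordinary short and long slides; one must also check that the bookkeeping of a ``$K$-rectification order'' is robust, in particular that any two such orders are connected by transpositions of non-interacting slides.

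The second and decisive step is to prove that a superstandard tableau $S_\mu$ is the \emph{unique} straight-shape shifted increasing tableau in its shifted $K$-Knuth class, i.e.\ that $S_\mu$ is a unique rectification target in the sense of~\cite{TY2,TY5}. The feature to exploit is the rigidity of $S_\mu$: its reading word is a permutation of $1,2,\dots,|\mu|$, hence \emph{repeat-free}, and the block structure imposed by superstandardness leaves essentially no room for the relations to act, so the $K$-idempotency relations are never triggered along the class and one is reduced to an essentially classical analysis; coupling this with the recovery of a straight-shape increasing tableau from (a canonical representative of) its reading word forces any competitor to equal $S_\mu$. Granting the two steps, Theorem~\ref{thm:main} is immediate: if $T$ rectifies to $S_\mu$ via one $K$-rectification order, then $w(T)$ is shifted $K$-Knuth equivalent to $w(S_\mu)$, and rectifying $T$ via any other order yields a straight-shape shifted increasing tableau whose reading word is still in that class, hence that tableau is $S_\mu$.

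The main obstacle is the uniqueness claim of the preceding paragraph. Unlike in the classical (non-$K$) theory, a shifted $K$-Knuth class can contain several distinct straight-shape increasing tableaux, so ``superstandard $\Rightarrow$ unique rectification target'' is not formal and cannot be reduced to a local confluence of slides: the hypothesis ``rectifies to a superstandard tableau'' is global, and the argument must use the internal structure of $S_\mu$. I expect that is where the real content lies; the reversibility of individual slides (via the infusion/switching involution of Section~2) and the window calculus of the first step are the supporting tools.
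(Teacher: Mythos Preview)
Your strategy is genuinely different from the paper's, and as you yourself flag, it has a real gap at the decisive step.

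The paper does not use reading words or Knuth-type relations at all. It proceeds by induction on $|T|$ via two devices developed earlier in the text. First, Theorem~\ref{thm:REU} (the Pieri-filling characterization) shows that the subtableau $A$ of $T$ on labels $1,\dots,\mu_1$ is a $\mu_1$-Pieri filling and hence rectifies to the single row $S_{(\mu_1)}$ under \emph{any} order. Second, the doubling construction of Section~3 embeds $T$ into a non-shifted increasing tableau $\widetilde T$ in a rectangle, and Lemma~\ref{lem:double} reduces shifted rectification of $T$ to non-shifted rectification of $\widetilde T$; one then invokes Theorem~\ref{thm:sixone} (the ${\tt LIS}$ invariant from \cite{TY5}) to see that the first row of \emph{any} rectification of $\widetilde T$, and hence of $T$, has length $\mu_1$. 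Thus after rectifying $A$ with respect to the alternate order $R$, the remainder $B=T\setminus A$ stays strictly below the first row, and induction finishes. The external input is the already-proved non-shifted theory of \cite{TY5}, imported via doubling; no shifted plactic monoid is constructed.

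Your route, by contrast, requires (a) a correct set of shifted $K$-Knuth relations together with a verification that every shifted $K$-jdt step---including diagonal moves and the split/merge phenomena---stays within a class, and (b) a proof that $S_\mu$ is the unique straight-shape shifted increasing tableau in its class. You concede (b) is the crux and do not supply it, and your heuristic for it does not hold up: the observation that $w(S_\mu)$ is repeat-free does \emph{not} mean the $K$-idempotency relations are inactive along the class. Any $T$ of shape $\nu/\lambda$ with $|\nu/\lambda|>|\mu|$ that rectifies to $S_\mu$ has a reading word with repeats, so words in the class of $w(S_\mu)$ are generically not repeat-free and you cannot reduce to the classical Worley--Sagan analysis. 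Without an independent invariant (playing the role that ${\tt LIS}$ plays in the paper via doubling) there is no mechanism in your outline to pin down even the first-row length of an arbitrary rectification. Step (a) is also not free: a complete list of shifted $K$-Knuth relations compatible with shifted $K$-jdt is not available in \cite{TY5} or here, so you would be building new machinery rather than citing existing results.
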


We can now state the following rule for the $K$-theoretic structure
constants, the conjecture from \cite{TY5} alluded to above.

\begin{Theorem}
\label{thm:main2}
$C_{\lambda\mu}^\nu$ equals $(-1)^{|\nu|-|\lambda|-|\mu|}$ times
the number of shifted increasing tableaux of
shape $\nu/\lambda$ that rectify to $S_\mu$.
\end{Theorem}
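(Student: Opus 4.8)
The plan is to deduce Theorem~\ref{thm:main2} from Theorem~\ref{thm:main} together with the Buch--Ravikumar Pieri rule, along the usual route: build a combinatorial ring out of rectification to superstandard tableaux and identify it with $K^0(X)$. For shifted shapes $\lambda,\mu,\nu\subseteq\Lambda$ I would set
\[
\tilde C^\nu_{\lambda\mu}=(-1)^{|\nu|-|\lambda|-|\mu|}\cdot\#\{\,T\in{\tt INC}(\nu/\lambda):T\text{ $K$-rectifies to }S_\mu\,\}.
\]
By Theorem~\ref{thm:main} the condition ``$T$ rectifies to $S_\mu$'' is independent of the $K$-rectification order, so $\tilde C^\nu_{\lambda\mu}$ is well defined; let $R$ be the free $\integers$-module on $\{[\lambda]:\lambda\subseteq\Lambda\}$ with product $[\lambda]\star[\mu]=\sum_\nu\tilde C^\nu_{\lambda\mu}[\nu]$ (that this is a finite sum staying inside $R$ is part of the framework of \cite{TY5}). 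Since a straight-shape tableau admits no slides, $[\emptyset]\star[\mu]=[\mu]$, and since a rectification has empty shape only if the tableau does, $[\mu]\star[\emptyset]=[\mu]$; thus $[\emptyset]$ is a two-sided unit. Everything then comes down to proving that $\star$ is associative and that $[\lambda]\mapsto[\mathcal O_{X_\lambda}]$ is a ring isomorphism $R\to K^0(X)$.

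Associativity is the crux. Expanding, the coefficient of $[\kappa]$ in $([\lambda]\star[\mu])\star[\rho]$ equals $(-1)^{|\kappa|-|\lambda|-|\mu|-|\rho|}$ times the number of pairs $(T,U)$ with $T\in{\tt INC}(\nu/\lambda)$ rectifying to $S_\mu$ and $U\in{\tt INC}(\kappa/\nu)$ rectifying to $S_\rho$ (the intermediate shape $\nu$ being recovered as the outer shape of $T$), while the coefficient of $[\kappa]$ in $[\lambda]\star([\mu]\star[\rho])$ is the same sign times the number of pairs $(W,V)$ with $W\in{\tt INC}(\kappa/\lambda)$ rectifying to some $S_\tau$ and $V\in{\tt INC}(\tau/\mu)$ rectifying to $S_\rho$. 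I would construct a bijection between these two sets by a $K$-theoretic version of \emph{infusion} (equivalently, tableau switching): starting from $(T,U)$, iterate jeu de taquin slides and their reverses to push the outer tableau $U$ inward through $T$ across the region $\kappa/\lambda$, recording the displaced tableau together with a straight-shape witness of its rectification. The corresponding construction of \cite{TY2,TY5} for standard (prescribed-content) tableaux carries over to increasing tableaux precisely when the intermediate rectifications are order-independent; by the definition of $\tilde C$ all of these rectifications are required to land on superstandard tableaux, and Theorem~\ref{thm:main} supplies order-independence exactly there. This makes the procedure well defined, with its reverse furnishing the inverse bijection, so $([\lambda]\star[\mu])\star[\rho]=[\lambda]\star([\mu]\star[\rho])$ and $(R,\star)$ is an associative unital ring.

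Next comes the Pieri comparison. Write $g_p=[\mathcal O_{X_{(p)}}]$ and $\gamma_p=[(p)]\in R$. The number $\#\{T\in{\tt INC}(\nu/\lambda):T\text{ rectifies to }S_{(p)}\}$ counts shifted increasing tableaux of shape $\nu/\lambda$ rectifying to the single row $1\,2\,\cdots\,p$, and the Buch--Ravikumar Pieri rule for $OG(n,2n+1)$ \cite{Bu} identifies $\tilde C^\nu_{\lambda,(p)}$ with the geometric coefficient $C^\nu_{\lambda,(p)}$ — this is the special case of Theorem~\ref{thm:main2} already established. Consequently, writing $\psi\colon R\to K^0(X)$ for the $\integers$-module isomorphism $[\lambda]\mapsto[\mathcal O_{X_\lambda}]$, we obtain $\psi(\gamma_p\star x)=g_p\cdot\psi(x)$ for all $p$ and all $x\in R$.

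Finally, the conclusion. It is classical that the cohomology ring $H^\star(X)$ is generated by the single-row Schubert classes; since $K^0(X)$ is filtered by codimension of support with associated graded $H^\star(X)$, a downward induction on codimension expresses each $[\mathcal O_{X_\lambda}]$ as an integer polynomial $P_\lambda(g_1,\dots,g_n)$, so the $g_p$ generate $K^0(X)$ as a ring. From $\psi(\gamma_p\star x)=g_p\,\psi(x)$ and injectivity of $\psi$ one gets first that the $\gamma_p$ commute pairwise, and then, applying $\psi$ to $P_\lambda(\gamma_1,\dots,\gamma_n)$, that $P_\lambda(\gamma_1,\dots,\gamma_n)=[\lambda]$ in $R$; hence $\gamma_1,\dots,\gamma_n$ generate $(R,\star)$. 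Associativity of $\star$ then forces $\psi$ to be a ring homomorphism on all of $R$, hence a ring isomorphism, and comparing structure constants gives $\tilde C^\nu_{\lambda\mu}=C^\nu_{\lambda\mu}$, which is Theorem~\ref{thm:main2}. The one genuinely new difficulty — absent from the cohomological theory — is that increasing-tableau jeu de taquin is not confluent, so the associativity argument (the second step above) is delicate precisely because the infusion/switching bijection must be routed through the single confluent situation, rectification to a superstandard tableau.
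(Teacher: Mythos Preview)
Your proposal is correct and follows essentially the same route as the paper: the paper packages your ``generators'' argument as Lemma~\ref{lemma:crucial}, defers associativity (and commutativity) of $\star$ to the $K$-infusion arguments of \cite[Section~5]{TY5} exactly as you do, and supplies the Pieri comparison $\tilde C^\nu_{\lambda,(p)}=C^\nu_{\lambda,(p)}$ via Corollary~\ref{cor:REUclaim} (note that \cite{Bu} alone gives the rule in terms of KOG-tableaux, so the identification with tableaux rectifying to $S_{(p)}$ is the content of Theorem~\ref{thm:REU} and Lemma~\ref{translation}, not something \cite{Bu} provides directly). One small slip: the Pieri identity yields $\psi(x\star\gamma_p)=\psi(x)\,g_p$ rather than $\psi(\gamma_p\star x)=g_p\,\psi(x)$, so either reverse the order of multiplication in your final paragraph or first establish commutativity of $\star$ via the $K$-infusion involution as in \cite{TY5}.
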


Among the  standard extensions of Schubert calculus (quantum, equivariant, $K$-theoretic and their
combinations), at present Theorem~\ref{thm:main2} represents the only complete Littlewood-Richardson type
rule for $OG(n,2n+1)$, either proved or even conjectural, beyond the setting of ordinary cohomology.

\begin{Example}
Let $\lambda=(3,1), \mu=(3,1), \nu=(5,3,1)$. The following six
increasing tableaux of shape
$(5,3,1)/(3,1)$ rectify to the superstandard tableaux $S_{(3,1)}$:
\begin{eqnarray*}
\tableau{\ & \ & \ & 1 & 3\\& \ & 1 & 4\\&& 2},
\
\tableau{\ & \ & \ & 2 & 3\\& \ & 1 & 4\\&& 2},
\
\tableau{\ & \ & \ & 1 & 3\\& \ & 2 & 4\\&& 4},\\
\tableau{\ & \ & \ & 1 & 3\\& \ & 2 & 3\\&& 4},
\
\tableau{\ & \ & \ & 2 & 3\\& \ & 1 & 4\\&& 4},
\
\tableau{\ & \ & \ & 1 & 3\\& \ & 1 & 2\\&& 4}.
\end{eqnarray*}
Hence $C_{(3,1),(3,1)}^{(5,3,1)}=(-1)^{9-4-4}(6)=-6$.\qed
\end{Example}

Together with the $K$-theoretic Littlewood-Richardson rule for Grassmannians
proved in \cite{TY5}, this work completes the proof of two key cases of the root-system uniform conjecture
for minuscule $G/P$'s  stated in that paper.
Note that since there is an isomorphism between $OG(n,2n+1)$ and (a component of) $OG(n+1,2n+2)$, Theorem~\ref{thm:main2}
applies to Schubert calculus of the latter space as well. Using the
(equivariant) $K$-theory Monk-Chevalley formula due to C.~Lenart--A.~Postnikov \cite{Lenart.Postnikov}, one can easily
 check the remaining minuscule classical type cases of the main conjecture of \cite{TY5} (i.e., projective spaces and even-dimensional quadrics); see \cite{Clifford}.

Using Theorem~\ref{thm:main2}, it should be possible to modify
the ``cominuscule recursion'' ideas set forth in \cite{TY2} to complete the proof of the
conjecture in the exceptional type cases ($E_6$ and $E_7$). (Alternatively, a computer aided proof is plausible.)
We also expect that one can combine the Pieri rule for maximal Lagrangian Grassmannians $LG(n,2n)$ \cite{Bu},
together with the ideas of this paper to prove a $K$-theory Littlewood-Richardson rule for that homogeneous space.
We may detail these ideas elsewhere.

There is a second ${\mathbb Z}$-linear basis of $K^{0}(X)$ given by
the classes of ideal sheaves of the boundary of Schubert varieties
$\partial X_\lambda=X_{\lambda}\setminus X_{\lambda}^{\circ}$ where
$X_{\lambda}^{\circ}$ is the open Schubert cell corresponding to $\lambda$, see, e.g.,
\cite[Section~2]{TY8} and the references therein. Define coefficients by
\[[\partial X_{\lambda}][\partial X_{\mu}]=\sum_{\nu}E_{\lambda,\mu}^{\nu}[\partial X_\nu].\]
Define a superset ${\widehat {\tt INC}(\nu/\lambda)}$ of
the set of increasing tableaux, by filling $\nu/\lambda$
with positive integers and the label ``$X$'' on any number of the outer corners of $\nu/\lambda$. We still demand
the rows and columns to be strictly increasing; the labels $X$
are assumed to have value $\infty$ insofar as this condition is concerned. We can still speak of $K$-rectification
of such tableaux, by first deleting any $X$'s before beginning the process.

In \cite{TY8}, the Grassmannian analogue of the following result was proved:

\begin{Corollary}
\label{cor:main3}
The structure constant $E_{\lambda,\mu}^{\nu}$ equals $(-1)^{|\nu|-|\lambda|-|\mu|}$ times the number of tableaux $T\in {\widehat {\tt INC}}(\nu/\lambda)$ that rectify to $S_{\mu}$.
\end{Corollary}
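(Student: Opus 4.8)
The plan is to deduce Corollary~\ref{cor:main3} from Theorem~\ref{thm:main2} by the argument used for ordinary Grassmannians in \cite{TY8}, with Theorem~\ref{thm:main2} supplying the only new input. The geometric ingredient is the change of basis between $\{[{\mathcal O}_{X_\lambda}]\}$ and $\{[\partial X_\lambda]\}$: since $[\partial X_\lambda]$ is the class of the structure sheaf of the open cell $X_\lambda^\circ$ extended by zero, and $X_\lambda=\bigsqcup_{\nu\supseteq\lambda}X_\nu^\circ$, the Schubert cell stratification gives the unitriangular relation $[{\mathcal O}_{X_\lambda}]=\sum_{\lambda\subseteq\nu\subseteq\Lambda}[\partial X_\nu]$ in $K^0(X)$. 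First I would record this --- it holds in the odd orthogonal setting exactly as for Grassmannians --- and invert it over the finite poset $P$ of shifted shapes inside $\Lambda$, ordered by containment, to get $[\partial X_\lambda]=\sum_\nu m(\lambda,\nu)[{\mathcal O}_{X_\nu}]$ with $m$ the M\"obius function of $P$.

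Substituting this expansion into $[\partial X_\lambda][\partial X_\mu]$, multiplying out the products of Grothendieck classes via Theorem~\ref{thm:main2}, and re-collecting in the $[\partial X_\nu]$-basis expresses $E_{\lambda,\mu}^{\nu}$ as a M\"obius-weighted sum of the constants $C^{\rho}_{\alpha,\beta}$. The target is then the identity
\[
E_{\lambda,\mu}^{\nu}=\sum_{\rho}(-1)^{|\nu|-|\rho|}\,C^{\rho}_{\lambda,\mu},
\]
summed over shifted shapes $\rho$ with $\lambda\subseteq\rho\subseteq\nu$ such that $\nu/\rho$ is a legal $X$-configuration. Granting it, write $C^{\rho}_{\lambda,\mu}=(-1)^{|\rho|-|\lambda|-|\mu|}c^{\rho}_{\lambda,\mu}$ as in Theorem~\ref{thm:main2}; then $\sum_\rho c^{\rho}_{\lambda,\mu}$ over the same range of $\rho$ counts exactly the $T\in\widehat{\tt INC}(\nu/\lambda)$ rectifying to $S_\mu$ --- decompose such a $T$ into its non-$X$ part, an increasing tableau of shape $\rho/\lambda$ rectifying to $S_\mu$, together with its $X$-part, which is forced to fill $\nu/\rho$ --- and the signs collapse to give the Corollary. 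Establishing the displayed identity reduces to two facts about $P$: that $\nu/\rho$ is a legal $X$-configuration precisely when the interval $[\rho,\nu]$ is Boolean, in which case $m(\rho,\nu)=(-1)^{|\nu/\rho|}$ (a short argument, as $P$ is a distributive lattice, so every interval is one), and that the resulting double M\"obius sum telescopes as claimed.

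I expect the main obstacle to be this last telescoping --- the combinatorial identity linking the $E$'s to the $C$'s. It is, however, exactly the computation performed in \cite{TY8} for Grassmannians, and that computation uses only (i) the shape of the Littlewood--Richardson rule, namely a count of increasing tableaux rectifying to a fixed superstandard tableau, which Theorem~\ref{thm:main2} provides in the shifted case, and (ii) elementary combinatorics of containment of shapes and of the $X$-labelling convention. So the work reduces to transporting that argument, checking at each step that Theorem~\ref{thm:main2} may be put in place of the Grassmannian $K$-theoretic Littlewood--Richardson rule and that the M\"obius computation above --- the shifted analogue of the ``rook strip'' description valid for ordinary shapes --- goes through. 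The one non-combinatorial point to verify separately is the change-of-basis relation $[{\mathcal O}_{X_\lambda}]=\sum_{\nu\supseteq\lambda}[\partial X_\nu]$ for $OG(n,2n+1)$, which as noted is immediate from its Schubert cell decomposition.
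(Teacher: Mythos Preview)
Your overall strategy---relate $E^\nu_{\lambda,\mu}$ to $C^\rho_{\lambda,\mu}$ via a change of basis and then invoke the argument of \cite{TY8} with Theorem~\ref{thm:main2} in place of the Grassmannian rule---matches the paper's. The difference lies in how the change of basis is obtained, and this is where your proposal has a real gap.

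You set up the change of basis as $[\mathcal{O}_{X_\lambda}]=\sum_{\nu\supseteq\lambda}[\partial X_\nu]$ and then M\"obius-invert. This yields
\[
E^\nu_{\lambda,\mu}=\sum_{\alpha\supseteq\lambda,\ \beta\supseteq\mu,\ \rho\subseteq\nu} m(\lambda,\alpha)\,m(\mu,\beta)\,C^\rho_{\alpha,\beta},
\]
which is \emph{not} of the form $\sum_\rho m(\rho,\nu)\,C^\rho_{\lambda,\mu}$ that you display as your target. The collapse from the triple sum to the single sum is not a M\"obius identity; it encodes the fact that the change of basis is \emph{multiplication by a fixed class}, namely $[\partial X_\lambda]=(1-[\mathcal{O}_{X_{(1)}}])\cdot[\mathcal{O}_{X_\lambda}]$. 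With that formula one computes
\[
[\partial X_\lambda]\cdot[\partial X_\mu]=(1-[\mathcal{O}_{X_{(1)}}])\sum_\rho C^\rho_{\lambda,\mu}\,[\partial X_\rho]
=\sum_{\rho,\nu} C^\rho_{\lambda,\mu}\,m(\rho,\nu)\,[\partial X_\nu],
\]
which is exactly your displayed identity. Without the multiplicative form, the ``telescoping'' you defer to \cite{TY8} does not go through: the argument in \cite{TY8} uses precisely this structure, not a bare M\"obius computation.

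The paper supplies this missing ingredient as Lemma~\ref{idealform}, proved via the $K$-theoretic duality pairing of \cite{Bri}; that in turn rests on the combinatorial Lemma~\ref{technical} showing $C^\Lambda_{\lambda,\mu}=\delta_{\mu,\lambda^\vee}$, whose proof uses the doubling argument and Theorem~\ref{thm:REU}. These two lemmas are the substance of Section~5 and are exactly what your proposal lacks. (A smaller point: the relation $[\mathcal{O}_{X_\lambda}]=\sum_{\nu\supseteq\lambda}[\partial X_\nu]$ is true, but it is not ``immediate from the cell decomposition''---extension by zero of $\mathcal{O}_{X_\lambda^\circ}$ is not coherent---so one must cite a result such as \cite{Bri} here as well.)
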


\begin{Example}
Let $\lambda=(2),\mu=(1)$ and $\nu=(3,1)$. Then the following three tableaux in
${\widehat {\tt INC}((3,1)/(2))}$ witness $E_{(2),(1)}^{(3,1)}=-3$:
$\tableau{\ & \ & 1\\&X}, \ \tableau{\ & \ & X\\&1}, \ \tableau{\ & \ & 1\\&1}$.\qed
\end{Example}

\subsection{Organization of the proof}
The proof follows the strategy outlined in \cite{TY5}, and uses an
analogue of \cite[Lemma~5.1]{TY5},  Lemma~\ref{lemma:crucial} below, which asserts that any rule computing
numbers $\{D_{\lambda,\mu}^{\nu}\}$ giving rise to an associative product on a ring with basis $\{[\lambda]\}$, and
which satisfy $C_{\lambda,(t)}^{\nu}=D_{\lambda,(t)}^{\nu}$ for all shapes $\lambda,\nu$ and
for all $1\leq t\leq n$,
must actually satisfy
$C_{\lambda,\mu}^{\nu}=D_{\lambda,\mu}^{\nu}$ for all $\lambda,\mu,\nu$.

We will apply this lemma to the numbers
$D_{\lambda\mu}^\nu$ given by our {\it jeu de taquin} rule.
A.~Buch and V.~Ravikumar \cite{Bu} recently
established a Pieri rule for computing $C_{\lambda,(t)}^{\nu}$.  We show that
the numbers produced by our rule in the Pieri case agree with their rule.
Hence, this paper completes our aforementioned strategy by giving a
proof of Theorem \ref{thm:main} and using the ideas of \cite{TY5} to show it
implies the desired associativity claim.
It is reported in \cite{Bu} that
I.~Feigenbaum and E.~Sergel also have a proof (unpublished) of the agreement between
our rule in the Pieri case and the Pieri rule of \cite{Bu}.

In Section~2 we review the definition of
{\it jeu de taquin} for
increasing tableaux as announced in \cite{TY5} and discuss properties of it
that we will need. In Sections~3 and 4 we prove Theorem~\ref{thm:main},
formulate the aforementioned Lemma~\ref{lemma:crucial},
and then use \cite{Bu} and Theorem~\ref{thm:REU} to complete the proof of Theorem~\ref{thm:main2}.
The key idea in our proof of
Theorem~\ref{thm:main}
is to use results proved in \cite{TY5}, together with
a ``doubling'' argument that rephrases the shifted problem as a non-shifted problem.  In Section~5, we show how to modify our arguments from \cite{TY8} to
prove Corollary~\ref{cor:main3}.

\section{Jeu de taquin for increasing tableaux}

In \cite{TY5}, a notion of {\it jeu de taquin} was introduced,
that applies to shapes
associated to a certain subposet $\Lambda$ of the poset of positive roots, and a minuscule
simple root. In the case of type $B_n$, $\Lambda$ is the shifted staircase and the shapes are precisely
the shifted shapes from Section~1. We now review this case.

Given $T\in {\tt INC}(\nu/\lambda)$, we say that an {\bf inner corner} is a maximally southeast
box $x\in\lambda$. More generally we are interested in a collection of inner corners
${\bf x}=\{x_i\}$. A {\bf short ribbon} ${\mathfrak R}$ is an edge-connected shifted skew-shape with at most two boxes in any row
or column and no $2\times 2$ subshape. $\mathfrak R$ is an {\bf alternating ribbon} if it is filled with two symbols where
adjacent boxes are filled differently. For such a ribbon, define
${\tt switch}({\mathfrak R})$ to have the same shape as
${\mathfrak R}$, and the filling obtained by interchanging the two symbols,
except that  if ${\mathfrak R}$ consists of a single box,
$\tt switch$ does nothing to
it.
Define $\tt switch$ to act on a union of alternating ribbons,
by acting on each separately.
For example:
\[{\mathfrak R}=\tableau{&&&{\bullet }\\&{\circ}&{\bullet}\\{\circ}&{\bullet}\\{\bullet}}
\mbox{ \ \ \ \ \ \ \ \ \ \ \ \ \ \
${\tt switch}({\mathfrak R})=
\tableau{&&&{\bullet}\\&{\bullet}&{\circ}\\{\bullet}&{\circ}\\{\circ}}$.}
\]

Fix inner corners ${\bf x}=\{x_1,\ldots,x_s\}$, and fill
each with a ``$\bullet$''.  Let ${\mathfrak R}_1$ be the
union of alternating ribbons made of boxes using $\bullet$ or $1$.  Apply
$\tt switch$ to ${\mathfrak R}_1$.  Now let ${\mathfrak R}_2$ be the union of alternating ribbons
using $\bullet$ or $2$, and proceed as before. Repeat until the $\bullet$'s
have been switched past all the entries of $T$.
The final placement of the numerical
entries gives  $K{\tt jdt}_{\{x_i\}}(T)$. It is easy to check that
if $T$ is an increasing tableau, then so is $K{\tt jdt}_{\{x_i\}}(T)$.

\begin{Example}
If ${\bf x}$ consists of the boxes with the $\bullet$'s in the depiction of $T$, then
we have:
\[T=
\tableau{{\ }&{\ }&{\bullet}&{1}\\&{\bullet}&{1}&{3}\\&&{2}}
\mapsto
\tableau{{\ }&{\ }&{1}&{\bullet}\\&{1}&{\bullet}&{3}\\&&{2}}
\mapsto
\tableau{{\ }&{\ }&{1}&{\bullet}\\&{1}&{2}&{3}\\&&{\bullet }}
\mapsto
\tableau{{\ }&{\ }&{1}&{3}\\&{1}&{2}&{\bullet}\\&&{\bullet}}
=K{\tt jdt}_{{\bf x}}(T)
\]\qed
\end{Example}

A sequence of $K{\tt jdt}$ operations applied successively to $T\in {\tt INC}(\nu/\lambda)$
that lead to a straight shape tableau $U\in {\tt INC}(\mu)$ is called a {\bf $K$-rectification},
and the said sequence is referred to as a {\bf $K$-rectification order}. Note that a $K$-rectification order
is naturally encoded by a shifted increasing tableau $R\in {\tt INC}(\lambda)$ whose largest labels indicate the
corners of $\lambda$ used in the first $K{\tt jdt}$-slide in the $K$-rectification, and whose second largest labels indicate
the corners used in the second $K{\tt jdt}$-slide, etc.

The same definitions were given and studied for increasing tableaux of non-shifted shapes in \cite{TY5}.
With the above definitions,
one has \emph{mutatis mutandis} extensions of Sections~2 and~3 of \cite{TY5}.
In particular, given
$T\in {\tt INC}(\lambda/\alpha) \mbox{\ and $U\in {\tt INC}(\nu/\lambda)$}$
we can define
\[K{\tt infusion}(T,U)=(K{\tt infusion}_1(T,U), K{\tt infusion}_2(T,U))\in {\tt INC}(\gamma/\alpha)\times
{\tt INC}(\nu/\gamma)\]
(for some straight shape $\gamma$). For brevity we do not repeat these definitions here.

Although our main purpose in this paper is to discuss
the shifted case, our arguments run, in part, through the non-shifted setting. We
also refer the reader to \cite{TY5} for definitions and details.

In particular, in \cite{TY5},
the {\bf longest (strictly) increasing subsequence}
of a non-shifted increasing tableau $T$ is defined.
This is a collection
of boxes of $T$, of maximal size, such that reading the labels of the boxes along
rows, from left to right, and from bottom to top, gives a strictly increasing sequence of numbers.  We denote the length of a
longest increasing
subsequence by ${\tt LIS}(T)$. In \cite{TY5}, the following result was
obtained:

\begin{Theorem}[{\cite[Theorem 6.1]{TY5}}]\label{thm:sixone}
 If $T$ is a non-shifted increasing tableau, then
the length of the first row of any (non-shifted) $K$-rectification of $T$ is
$\LIS(T)$.
\end{Theorem}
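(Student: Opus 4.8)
The plan is to derive the statement from two ingredients: (i) for a straight shape $\mu$, every $U\in{\tt INC}(\mu)$ satisfies $\LIS(U)=\mu_1$; and (ii) a single $K{\tt jdt}$ slide, into an arbitrary set of inner corners, preserves $\LIS$. Granting (i) and (ii), the theorem is immediate: a $K$-rectification of $T$ is by definition a composition of $K{\tt jdt}$ slides terminating at a straight-shape tableau $U\in{\tt INC}(\mu)$, so the length of its first row is $\mu_1$, which by (i) equals $\LIS(U)$ and by (ii) equals $\LIS(T)$.

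Ingredient (i) is elementary. Given $U\in{\tt INC}(\mu)$, its first row is a strictly increasing sequence of length $\mu_1$, so $\LIS(U)\ge\mu_1$. Conversely, suppose an increasing subsequence used two boxes $b,b'$ in a common column. In the reading order (each row left to right, rows from the bottom up) the box in the lower row is read first, yet its entry is strictly the larger since columns of $U$ increase downward; this is impossible along a strictly increasing subsequence. Hence each column contributes at most one box, so $\LIS(U)\le\mu_1$.

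For ingredient (ii), fix inner corners ${\bf x}=\{x_1,\dots,x_s\}$ and recall that $K{\tt jdt}_{\bf x}$ is the composite: place $\bullet$'s on ${\bf x}$, then $\tt switch$ past the $1$'s, past the $2$'s, and so on, until the $\bullet$'s have been switched past all numerical entries. I would track a longest strictly increasing subsequence of the numerical entries through this sequence of $\tt switch$ steps. Initially the numerical entries occupy $\nu/\lambda$; at the end they occupy the shape of $K{\tt jdt}_{\bf x}(T)$; at each intermediate stage they occupy a skew-like region punctured by the current $\bullet$-boxes. The claim to establish is that a longest increasing subsequence of the numerical entries has the same length at every stage. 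For a single $\tt switch$ step this reduces to a local rerouting of the tracked subsequence across the alternating ribbons being switched; the genuinely new case, beyond ordinary jeu de taquin, is when a $\bullet$ fissions into two (and, run in reverse, when two $\bullet$'s fuse), where one checks that an increasing subsequence can be detoured around the moved boxes with no change in length. If the local move is only verified to be length non-decreasing in one direction, the reverse inequality is supplied by the reversibility of $K{\tt jdt}$ slides recorded in Section~2 (the $K{\tt infusion}$ involution): the inverse of a slide is again a slide, so applying the inequality in both directions forces equality.

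The main obstacle is exactly this bookkeeping: unlike in the classical theory, a $\bullet$ may split and later merge within a single slide, so one must update the location of the tracked subsequence carefully and guarantee at every intermediate stage that it avoids all $\bullet$-boxes while keeping its length. An alternative to the explicit tracking is to show that the reading words of $T$ and of $K{\tt jdt}_{\bf x}(T)$ differ by a sequence of elementary ($K$-Knuth) moves and that $\LIS$ of a word is invariant under such moves; this repackages the same casework but may be cleaner to organize, and it meshes with the extension of the Sections~2--3 machinery of \cite{TY5} invoked above.
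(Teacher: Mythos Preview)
This theorem is not proved in the present paper; it is quoted from \cite[Theorem~6.1]{TY5} and used as a black box (in Section~3 and in the proof of Theorem~\ref{thm:main}). There is therefore no proof here to compare your attempt against.

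As for the proposal on its own terms: the reduction to (i) and (ii) is correct, and your argument for (i) is fine. Step (ii), however, is a plan rather than a proof. You have correctly isolated the genuine difficulty --- the splitting and merging of $\bullet$'s absent from classical {\it jeu de taquin} --- and the observation that reversibility via $K{\tt infusion}$ reduces the task to a one-sided inequality is a good one. But the local rerouting is never carried out: a single {\tt switch} step acts on an entire union of alternating ribbons, not on one box, so the ``detour'' of a tracked increasing subsequence around the moved entries involves real case analysis that you have only named, not done. The alternative you suggest, showing invariance of $\LIS$ under elementary moves on reading words, likewise requires proving that invariance; it repackages the same content rather than avoiding it. In short, your outline is a reasonable roadmap, and is in the spirit of how such statements are typically established, but the substantive step remains to be written.
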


\section{The doubling argument}
Fix the rectangular shape
\[\tLambda=(n+1)\times n.\]
Observe that
the boxes of $\tLambda$ can be divided into two parts, a copy of $\Lambda$ in the top right,
and a copy of the transpose of $\Lambda$ in the bottom left, that we denote $\trLambda$. For $\lambda$ a shape in $\Lambda$,
define $\trlambda$ to be the transpose of $\lambda$ in placed into
$\trLambda$. Define
\[\tlambda=\lambda \cup \trlambda.\]
Note that this is a non-shifted shape in $\tLambda$.

Let $T$ be an increasing filling of $\nu/\lambda$ in $\Lambda$.
We will define a filling
$\tT$ of $\tnu/\tlambda$, as follows.
Fill the copy of $\trnu/\trlambda$ inside $\tLambda$ with
entries obtained by doubling the corresponding entries of $T$.  Fill
the copy of $\nu/\lambda$ inside $\Lambda$ with entries obtained by doubling
the entries of $T$ and subtracting 1.
Consequently, $\tT\cap \Lambda$ is a tableau with
odd number entries whose
{\bf flattening} is $T$, i.e., the tableau produced by replacing the labels
that appear in $\tT\cap\Lambda$ by $\{1,2,3\dots\}$ in an order-preserving
fashion, is $T$. Meanwhile
$\tT\cap \trLambda$ is a tableau with even number entries whose
flattening is the transpose of $T$. It is easy to see that $\tT$ is increasing.

\begin{Example}
For instance, the skew increasing tableaux given in Section~2 is doubled as follows:
\[\begin{picture}(160,110)
\put(-5,55){$T=\ $\tableau{{\ }&{\ }&{ \ }&{ 1 }\\&{\ }&{1 }&{3}\\&&{2 }}}
\put(75,50){$\ \ \mapsto$}
\put(100,75){\tableau{{}&{}&{}&{1}&{\ }\\
{}&{}&{1}&{5}&{\ }\\
{}&{}&{3}&{\ }&{\ }\\
{}&{2}&{4 }&{\ }&{\ }\\
{2}&{6}&{\ }&{\ }&{\ }\\
{\ }&{\ }&{\ }&{\ }&{\ }}}
\put(180,55){$={\widetilde T}$}
\linethickness{2pt}
\put(100,75){\line(1,0){16}}
\put(115,75){\line(0,-1){16}}
\put(115,60){\line(1,0){16}}
\put(130,60){\line(0,-1){16}}
\put(130,45){\line(1,0){16}}
\put(145,45){\line(0,-1){16}}
\put(145,30){\line(1,0){16}}
\put(160,30){\line(0,-1){16}}
\put(160,15){\line(1,0){16}}
\end{picture}
\]
\qed
\end{Example}

The main result of this section is the following lemma:

\begin{Lemma} \label{lem:double}
If $T$ in $\Lambda$ rectifies to $V$ with respect to
the $K$-rectification order encoded by the shifted increasing tableau $S$, then
$\tT$ rectifies to $\widetilde V$ with
respect to the $K$-rectification order encoded by the non-shifted increasing tableau $\widetilde S$.
\end{Lemma}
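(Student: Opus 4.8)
The plan is to reduce the statement to a single‐slide assertion and then analyze that by tracking the switch dynamics. First I would unwind the rectification order encoded by $\widetilde S$. By the doubling recipe applied to the straight shape $S\in {\tt INC}(\lambda)$, the $\Lambda$‑part of $\widetilde S$ carries the odd labels $2S_{ij}-1$ and the $\trLambda$‑part carries the even labels $2S_{ij}$, each partner pair differing by exactly $1$ (with the $\trLambda$‑entry the larger). Hence, writing $\mathbf{x}^{(1)},\dots,\mathbf{x}^{(m)}$ for the successive corner‑sets read off $S$ (largest label first) and $\mathbf{x}^\dagger$ for the copy in $\trLambda$ of a set $\mathbf{x}\subseteq\Lambda$ under the transposition $\lambda\mapsto\trlambda$, the order encoded by $\widetilde S$ is
\[ (\mathbf{x}^{(1)})^\dagger,\ \mathbf{x}^{(1)},\ (\mathbf{x}^{(2)})^\dagger,\ \mathbf{x}^{(2)},\ \dots,\ (\mathbf{x}^{(m)})^\dagger,\ \mathbf{x}^{(m)}. \]
One checks this is legal for $\widetilde T$: the corners $(\mathbf{x}^{(k)})^\dagger$ are always maximally southeast in the current inner shape, while $\mathbf{x}^{(k)}$ may be blocked in $\widetilde\lambda$ precisely when it contains a diagonal corner $(i,i)$ of $\lambda$ — but then the transpose $(i+1,i)$ of that corner sits directly below it and is removed by the immediately preceding $(\mathbf{x}^{(k)})^\dagger$‑slide, which is exactly why the $\trLambda$‑corners go first. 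It then suffices to prove the single‑slide statement: for $U\in{\tt INC}(\nu/\lambda)$ in $\Lambda$ with inner‑corner set $\mathbf{x}$,
\[ K{\tt jdt}_{\mathbf{x}}\bigl(K{\tt jdt}_{\mathbf{x}^\dagger}(\widetilde U)\bigr)=\widetilde{K{\tt jdt}_{\mathbf{x}}(U)}. \]
Lemma~\ref{lem:double} follows by iterating this over $k=1,\dots,m$, since the right‑hand side is again of doubled form, and since $V$ (being straight shifted) doubles to a straight shape, so the process on $\widetilde T$ really is a rectification.

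Second I would prove the single‑slide statement by showing that the two ordinary slides \emph{decouple}: in $K{\tt jdt}_{\mathbf{x}^\dagger}(\widetilde U)$ every alternating ribbon lies entirely inside $\trLambda$, and in the subsequent $K{\tt jdt}_{\mathbf{x}}$ every ribbon lies entirely inside $\Lambda$. Granting this, the $\trLambda$‑part of $\widetilde U$ — which is the copy in $\trLambda$ of $U$ with all entries doubled — is transformed by an ordinary slide confined to $\trLambda$; since odd symbols act trivially on a tableau whose entries are all even, and since the definition of $K{\tt jdt}$ is unchanged under transposition, this yields twice the $\trLambda$‑copy of $K{\tt jdt}_{\mathbf{x}}(U)$. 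Symmetrically the $\Lambda$‑part (entries $2U-1$) becomes $2\,K{\tt jdt}_\mathbf{x}(U)-1$, using that a shifted $K{\tt jdt}$ slide is, by the definition recalled in Section~2, just an ordinary $K{\tt jdt}$ slide all of whose boxes happen to lie in $\Lambda$. Reassembling the two halves gives $\widetilde{K{\tt jdt}_\mathbf{x}(U)}$. (One must also track the $K$‑theoretic phenomenon that a switch of a ribbon with unequally many $\bullet$'s and numbers changes the number of $\bullet$'s; this is harmless once the decoupling is in hand, since all such ribbons also stay within one region.)

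The main obstacle is the decoupling claim, which I would prove by induction on the symbol $k$ being processed, maintaining the invariant that all $\bullet$'s lie in $\trLambda$ and every box of $\Lambda$ still carries its original odd value (or is a hole). The only way the invariant could fail at step $k$ is a $\bullet$ sitting on an interface box $(r,r-1)\in\trLambda$ edge‑adjacent to a $k$‑valued $\Lambda$‑box, necessarily one of its two $\Lambda$‑neighbors $(r-1,r-1)$ (its partner) or $(r,r)$. The partner is ruled out at once: either $(r,r-1)$ is a starting $\bullet$ and $(r-1,r-1)\in\lambda$ is a hole, or $(r,r-1)$ is a skew box, in which case a $\bullet$ can first reach it only at symbol $\widetilde U_{(r,r-1)}=2U_{(r-1,r-1)}$, by which time the partner's value $2U_{(r-1,r-1)}-1$ has long been processed. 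The box $(r,r)$ needs the key point: if $(r,r)$ is a skew box then, because $U$ strictly increases along column $r$ through the intermediate box $(r-1,r)\in\nu/\lambda$, one has $U_{(r-1,r)}<U_{(r,r)}$, so the box $(r+1,r-1)$ directly below $(r,r-1)$ (the $\trLambda$‑copy of $(r-1,r)$) is itself a skew box with the strictly smaller value $2U_{(r-1,r)}<2U_{(r,r)}-1=\widetilde U_{(r,r)}$; hence the $\bullet$ at $(r,r-1)$ is drawn down into $\trLambda$ by an earlier switch before symbol $\widetilde U_{(r,r)}$ is ever reached, while if $(r,r)$ is a hole or lies outside $\nu$ there is no threat at all. (When $(r,r-1)$ is a starting $\bullet$, a diagonal corner $(r-1,r-1)$ of $\lambda$ forces $\lambda$ to end in row $r-1$ with $\lambda_{r-1}=1$, so $(r-1,r)\notin\lambda$ and the identical argument applies.) A symmetric argument handles the subsequent $\mathbf{x}$‑slide. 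What requires care is propagating the invariant through large ribbons and through $\bullet$‑duplication, but the parity obstruction above is local and survives those steps.
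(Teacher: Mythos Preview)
Your proof is correct and follows essentially the same approach as the paper: reduce to the single-slide identity $K{\tt jdt}_{\mathbf{x}}K{\tt jdt}_{\mathbf{x}^\dagger}\widetilde T = \widetilde{K{\tt jdt}_{\mathbf{x}}T}$, then show the two non-shifted slides decouple at the $\Lambda/\trLambda$ interface via the inequality $2U_{(r-1,r)} < 2U_{(r,r)}-1$, which is exactly the paper's $b^\dagger < c$. You supply extra scaffolding (legality of the order read off $\widetilde S$, and the northwest-neighbor $(r-1,r-1)$ case, which the paper omits since the outer-corner invariant for $K{\tt jdt}$ bullets rules it out automatically), but the substance of the argument is the same.
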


\begin{proof}
Let ${\bf x}$ be an collection of inner corners of $T$, and ${\bf x}^\dagger$ the corresponding collection of
inner corners of $\trT$.  It suffices to show that
$$K{\tt jdt}_{{\bf x}}K{\tt jdt}_{{\bf x}^\dagger} \widetilde T= \widetilde{K{\tt jdt}_{{\bf x}} T}.$$
(Here the lefthand side is a pair of non-shifted $K{\tt jdt}$-slides, while the righthand side is a single shifted $K{\tt jdt}$-slide.) To show this, it is sufficient to show that $K{\tt jdt}_{{\bf x}^\dagger}$ applied to
$\widetilde T$ will only alter $\widetilde T \cap \trLambda$, and then
that the subsequent application of $K{\tt jdt}_{\bf x}$ will only alter $\widetilde T
\cap \Lambda$. Given this, it is clear that both operations mimic how $K{\tt jdt}_{\bf x}$ acts on
$T$ itself (up to a relabeling).

When considering $K{\tt jdt}_{{\bf x}^\dagger}$, the desired claim is clear
for all steps of the slide except when the current switch involves a $\bullet$ on the sub-diagonal
(that is to say, on the northwest-southeast diagonal boxes of $\trLambda$).  We wish
to know that it is the entry below the $\bullet$ which will move into it, rather
than the entry to its right (which is in $\Lambda$).

It is not difficult to argue that in such a situation the local picture of
the slide is as follows: the entries $a,b,c$ are in $\Lambda$, with corresponding
entries $\bullet,b^\dagger,c^\dagger$ in $\trLambda$. In particular, $b^\dagger$ and
$c^\dagger$ have not yet moved from their pre-slide position. Hence, $b^\dagger=b+1$ and $c^\dagger=c+1$.

\[\begin{picture}(160,40)
\put(60,30){$\ldots \ \tableau{  a&b   \\
\bullet & c\\
            b^\dagger&c^\dagger} \ldots$}
\linethickness{2pt}
\put(80,45){\line(-1,0){16}}
\put(80,30){\line(0,1){16}}
\put(80,30){\line(1,0){16}}
\put(95,30){\line(0,-1){16}}
\put(95,15){\line(1,0){16}}
\put(110,15){\line(0,-1){16}}
\put(110,0){\line(1,0){16}}
\end{picture}\]

Since $b<c$, we know $b^\dagger<c$, so when we apply a {\tt switch} which
replaces the $\bullet$ with a numerical entry, it is $b^\dagger$
which will switch with the $\bullet$ (and possibly a $\bullet$ to its left, although this does not affect
our argument), leaving the entries
in $\Lambda$ unaltered. Hence at the completion of $K{\tt jdt}_{{\bf x}^{\dagger}}({\widetilde T})$,
one obtains exactly $K{\tt jdt}_{{\bf x}}(T)$, after transposing and halving the values of each label.
The part of ${\widetilde T}$ inside the copy of ${\Lambda}$ is unchanged.

Now, let $W=K{\tt jdt}_{{\bf x}^{\dagger}}({\widetilde T})$.  We now consider the
effect of applying $K{\tt jdt}_{\bf x}$ to $W$ (inside $\widetilde \Lambda$)
and to $T$ inside $\Lambda$.
Again, the two calculations clearly proceed in exactly the same way, except
perhaps when a $\bullet$ lies on the diagonal, so we focus on that situation.
In the computation of $K{\tt jdt}_{\bf x}(T)$, at the step in which the $\bullet$ is replaced
by a numerical entry, it must be the label to the right of the $\bullet$
which is switched into the diagonal box; we must confirm that the same
thing happens in the present calculation in $\widetilde \Lambda$.

Suppose that $a^{\dagger}$ is the label just below the $\bullet$ in $\widetilde
\Lambda$.
By the computation of
$W=K{\tt jdt}_{{\bf x}^{\dagger}}({\widetilde T})$ which we just did, we know that
$W\cap \trLambda$ agrees with $K{\tt jdt}_{\bf x}(T)$ (up to the doubling of the
entries).
Then, since (by induction) all the steps of
our present computation of $K{\tt jdt}_{\bf x}(W)$ precisely mimic the moves of
$K{\tt jdt}_{\bf x}(T)$ up until this diagonal move,
we must have $a$ to the right of the
$\bullet$ in $\Lambda$, with $a^\dagger=a+1$. Thus, we find ourselves in the following
situation:
\[\begin{picture}(160,35)
\put(60,20){$\ldots \tableau{\bullet &a\\
          a^\dagger}\ldots$}
\linethickness{2pt}
\put(60,35){\line(1,0){16}}
\put(75,36){\line(0,-1){16}}
\put(74,20){\line(1,0){16}}
\put(90,21){\line(0,-1){16}}
\put(89,5){\line(1,0){16}}
\end{picture}
\]
Since $a^\dagger>a$, $a$ will move to the right in our calculation of
$K{\tt jdt}_{\bf x}(W)$, as desired.
\end{proof}

\section{Proofs of the main results}

A {\bf vertical strip} is a collection of boxes, no two in the same row, such that each box is weakly west
of the boxes above it.  Similarly, a {\bf horizontal strip} is a collection of boxes, no two in the same column, such that each box is weakly north of the boxes which are west of it.
Define a {\bf $t$-Pieri filling} to be an increasing filling $T$
of a skew shape in $\Lambda$
using
 all of the numbers
$1,\dots, t$
with the property that $T$ consists of:
\begin{itemize}
 \item[(i)] a vertical strip with entries increasing from northeast to southwest,
with repeated entries
allowed but necessarily in different columns, and
\item[(ii)] a horizontal strip with entries increasing from southwest to northeast,
with repeated entries
allowed but necessarily in different rows,
\item[(iii)] such that all the entries in the vertical strip are
less than or equal to the entries in the horizontal strip.
\end{itemize}

We say that a filling which includes $\bullet$'s is a $t$-Pieri filling
if its numbered boxes can be divided into a horizontal and vertical strip satisfying the above conditions
and such that the $\bullet$'s are positioned southwest to northeast, with no two in any given row or column.

\begin{Example}
The following are examples of $4$-Pieri fillings, without and with $\bullet$'s:

$$\tableau{{\ }&{\ }&{{\bf 1}}&{3}&{4}\\&{\ }&{{\bf 2}}\\&&{{\bf 3}}},
\qquad \tableau{{\ }&{\ }&{\ }&{\bf 1}&4\\&{\ }&{\bf 2}&{\bullet}\\& & {\bullet}&3}$$
We have used boldface to highlight the vertical strip.\qed
\end{Example}

A shifted skew shape containing no $2\times 2$ square is called a
{\bf ribbon}. Note that a $t$-Pieri filling without $\bullet$'s necessarily has ribbon shape.
Such fillings appear at the beginning and end of the
induction argument of Lemma~\ref{lem:goodrect} below.

\begin{Lemma} \label{Cliffa} If $T$ is a $t$-Pieri filling (possibly containing
$\bullet$'s) then there is some number $k$ (not necessarily unique) such that the boxes with labels
from $1$ to $k$ form a vertical strip, and the boxes with labels from
$k+1$ to $t$ form a horizontal strip. \end{Lemma}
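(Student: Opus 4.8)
The plan is to exhibit an explicit choice of $k$ by reading off the $t$-Pieri filling structure directly. Let $T$ be a $t$-Pieri filling, and recall that its numbered boxes decompose as a vertical strip $V$ (entries increasing from northeast to southwest, repeats allowed only in different columns) and a horizontal strip $H$ (entries increasing from southwest to northeast, repeats allowed only in different rows), subject to condition (iii): every entry of $V$ is $\le$ every entry of $H$. First I would set
\[
k = \max\{\, m : \text{some box of } V \text{ carries the label } m \,\},
\]
with the convention $k=0$ if $V$ is empty. I claim this $k$ works: the boxes with labels $1,\dots,k$ are exactly $V$, and the boxes with labels $k+1,\dots,t$ are exactly $H$.

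To prove the claim I would argue both inclusions. By choice of $k$, every label on $V$ lies in $\{1,\dots,k\}$; conversely I must check that no label $\le k$ occurs in $H$, equivalently that every box carrying a label in $\{1,\dots,k\}$ lies in $V$. Suppose some box of $H$ carries a label $j \le k$. By definition of $k$ there is a box of $V$ with label exactly $k \ge j$; but condition (iii) forces the $V$-label to be $\le$ the $H$-label, i.e. $k \le j$, so $j=k$ and the $H$-box has label $k$. Now I would use the strictness of $T$ together with the strip geometry to rule this out: in a $t$-Pieri filling every label $1,\dots,t$ is used, so by (iii) a label equal to $k$ appearing in $H$ must be the \emph{minimum} label of $H$, and similarly the $V$-box with label $k$ is a \emph{maximum} label of $V$; the position of such a shared boundary value between a southwest-increasing vertical strip and a southwest-to-northeast-increasing horizontal strip, both pieces of a single strictly increasing filling of a skew shape in $\Lambda$, cannot be realized — the $H$-box with label $k$ would sit weakly north and east of, hence in the same row or column as, the $V$-box with label $k$, violating strict increase. (If $\bullet$'s are present, they are positioned southwest to northeast with no two in a row or column and interleave harmlessly, so the same adjacency analysis applies to the numbered boxes.) This gives $H \subseteq \{k+1,\dots,t\}$, and since $V \cup H$ exhausts the numbered boxes, the two inclusions are forced to be equalities, so $\{1,\dots,k\}$-boxes $= V$ is a vertical strip and $\{k+1,\dots,t\}$-boxes $= H$ is a horizontal strip.

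The only genuinely delicate point is the adjacency argument that a common value $k$ cannot be simultaneously a maximal label of the vertical strip and a minimal label of the horizontal strip. I would handle it by examining where the two strips meet: since $V$ descends to the southwest and $H$ ascends to the northeast, and both are part of one skew shape, any shared boundary cell of the two regions would be the southwesternmost cell of $H$ and the northeasternmost cell of $V$, and these are either equal or vertically/horizontally adjacent, in either case contradicting strict monotonicity of $T$ in rows and columns. The non-uniqueness of $k$ noted in the statement corresponds exactly to the freedom, when $V$ and $H$ happen to share no label near their interface, to move the cut point within a run of consecutive integers all realized in a single column (for $V$) or row (for $H$) — but existence of \emph{some} valid $k$ is all that is asserted, and the maximal choice above always succeeds.
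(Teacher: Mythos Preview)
Your argument has a genuine gap: the claim that the label $k=\max(\text{labels of }V)$ cannot also appear in $H$ is false, and your adjacency reasoning does not establish it. Look at the first displayed example of a $4$-Pieri filling given just after the definition: the vertical strip $V$ occupies positions $(1,3),(2,3),(3,3)$ with labels $1,2,3$, and the horizontal strip $H$ occupies $(1,4),(1,5)$ with labels $3,4$. Here your $k=3$, but $3$ also appears in $H$ at $(1,4)$. The two boxes carrying $3$ lie in different rows and different columns, so strict increase is not violated. The set of boxes with labels in $\{1,2,3\}$ is therefore \emph{not} equal to $V$, and is not a vertical strip at all, since $(1,3)$ and $(1,4)$ share a row. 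The error in your adjacency argument is a conflation of ``the $V$-box carrying the maximum label'' with ``the northeasternmost box of $V$'': because the vertical strip increases from northeast to \emph{southwest}, its maximal label sits at the \emph{southwestern} end, which can be far (diagonally) from the southwesternmost box of $H$. ``Weakly north and east'' simply does not imply ``same row or column''.

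The paper's proof does not attempt to rule out this overlap. Instead, when the maximum label $K$ of $V$ also occurs in $H$, it constructs a \emph{different} decomposition $V',H'$ (which need not coincide with the given $V,H$) by a short case analysis: either all boxes labeled $K,\dots,t$ already form a horizontal strip, in which case $k=K-1$ works; or some box labeled $K$ lies north of a box labeled $K+1$, which forces that $K$-box into $V$, and then one checks that taking $V'=\{\text{labels }1,\dots,K\}$ and $H'=\{\text{labels }K+1,\dots,t\}$ gives a valid decomposition with $k=K$. The point you missed is that the lemma only asks for \emph{some} cut by label-value into a vertical and a horizontal strip, not that the resulting pieces agree with the original $V$ and $H$.
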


\begin{proof} By assumption, $T$ can be viewed as the disjoint union of
a vertical strip of boxes, $V$, and a horizontal strip of boxes, $H$, with
the labels of $V$ weakly less than the labels of $H$, together with a set of
boxes filled with $\bullet$'s.  If there is no
overlap between the labels of $V$ and the labels of $H$, then we are
done, i.e., we can set $k=K$ where $K$ is the maximum value of a label in $V$.
Otherwise, suppose this $K$ occurs in both $V$ and $H$.
We show that there exists a (possibly) different way to subdivide $T$ into
a vertical strip $V'$ and a horizontal strip $H'$, as in the definition of
$t$-Pieri filling, and such that $V'$ and $H'$ have no label
in common.

If we could take $H'$ to consist of all the boxes numbered $K$ to $t$
(and would thereby obtain a subdivision as in the definition of $t$-Pieri
filling), we would be done with $k=K-1$.  So suppose otherwise.
This implies that there is some box labeled $K$ which is
north of a box labeled $K+1$.  This box labeled $K$ must be in $V$.  In
particular, this implies that the most northerly box labeled $K$
is in $V$, and therefore that we may take $V'$ to consist of all boxes
labeled $1$ to $K$, $H'$ to consist of the remaining boxes, and hence $k=K$.
\end{proof}

\begin{Lemma} \label{lem:goodrect} If $T$ is a $t$-Pieri filling (without $\bullet$'s) then it rectifies to
$S_{(t)}$ for any rectification order.
\end{Lemma}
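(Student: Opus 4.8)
The plan is to prove the statement by induction on $t$, the number of entries in the $t$-Pieri filling $T$, using the ``flattening/doubling'' machinery together with Theorem~\ref{thm:sixone}. Since a $t$-Pieri filling without $\bullet$'s has ribbon shape, and we want to show it rectifies to the single-row superstandard tableau $S_{(t)}$, the natural strategy is to show that every $K{\tt jdt}$-slide applied to a $t$-Pieri ribbon produces another $t$-Pieri ribbon (possibly with $\bullet$'s during the intermediate switch steps, which is why Lemma~\ref{Cliffa} was set up to handle $\bullet$'s), and that the rectification must eventually reach a straight ribbon shape, which in the $t$-Pieri case forced to be the single row $(t)$; then verify that a straight-shape $t$-Pieri filling filling a single row with $1,\dots,t$ is exactly $S_{(t)}$.

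First I would set up the induction: the base case $t=1$ is trivial since a single box rectifies to itself. For the inductive step, pick any inner corner (or collection of inner corners) $\mathbf{x}$ of $\lambda$ where $T\in{\tt INC}(\nu/\lambda)$, and analyze $K{\tt jdt}_{\mathbf{x}}(T)$. The key local claim is that each individual $\tt switch$ step in the slide preserves the property of being a $t$-Pieri filling with $\bullet$'s: using Lemma~\ref{Cliffa}, at the stage where we are switching $\bullet$'s with the symbol $i$, the boxes labeled $1,\dots,k$ form a vertical strip and $k+1,\dots,t$ form a horizontal strip for some $k$, and one checks that sliding the $\bullet$'s past the $i$'s respects the geometry — a $\bullet$ southwest-to-northeast pattern stays southwest-to-northeast, and the vertical/horizontal strip conditions (including the ``weakly less'' condition (iii) and the ``different columns/rows'' repetition conditions) are maintained. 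This is a finite case analysis on the shape of the alternating ribbon ${\mathfrak R}_i$ relative to the partition of $T$'s boxes into $V$ and $H$.

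Alternatively — and perhaps more cleanly — I would route through the non-shifted setting via the doubling construction of Section~3. Given the $t$-Pieri ribbon $T$ in $\Lambda$, form $\widetilde T$ in $\widetilde\Lambda$ by doubling. Using Lemma~\ref{lem:double}, the shifted rectification of $T$ to some straight shape $V$ corresponds to the non-shifted rectification of $\widetilde T$ to $\widetilde V$. One observes that $\widetilde T$ — being the union of $T$ with odd entries and the transpose of $T$ with even entries — is a non-shifted increasing tableau whose longest strictly increasing subsequence has length $t$ (the $t$ numbered boxes of $T$, read in increasing order: this uses precisely that $T$ is a $t$-Pieri filling, so the $i$'s form an increasing sequence of distinct values $1,\dots,t$ along a path, and the ${\tt LIS}$ cannot be longer because the entries only go up to $t$). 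Then Theorem~\ref{thm:sixone} forces the first row of $\widetilde V$ to have length exactly $t$. Combined with the fact that $\widetilde V$ is the double of a shifted straight shape $V$ and that $V$ has only $t$ boxes, one deduces $V=(t)$, i.e.\ $V$ is a single row; filling a single row increasingly with $1,\dots,t$ gives exactly $S_{(t)}$. Since the rectification order was arbitrary, this finishes the proof.

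The main obstacle I anticipate is the bookkeeping in showing that the $t$-Pieri structure (the split into a vertical strip and a horizontal strip with the inequality between them) is genuinely preserved at each $\tt switch$, especially tracking what happens when the vertical and horizontal strips share a boundary label $K$ (exactly the case Lemma~\ref{Cliffa} was designed to untangle), and when a $\bullet$ slide reorganizes boxes across that junction. If I take the doubling route instead, the obstacle shifts to the ${\tt LIS}$ computation: I must argue carefully that no longer increasing subsequence appears in $\widetilde T$ using a mix of odd entries (from $\Lambda$) and even entries (from $\trLambda$), but the ribbon shape of $T$ and the $t$-Pieri inequalities should make this manageable, since any such mixed path would have to cross the sub-diagonal and the entries are tightly constrained ($b^\dagger=b+1$ etc., as in the proof of Lemma~\ref{lem:double}). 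Either way, once the shape of the rectification is pinned to a single row of length $t$, identifying it with $S_{(t)}$ is immediate.
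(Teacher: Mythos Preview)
Your first approach — showing that each individual {\tt switch} preserves the $t$-Pieri property, and then noting that the only straight-shape $t$-Pieri filling is $S_{(t)}$ — is exactly the paper's proof. You correctly identify that Lemma~\ref{Cliffa} gives the $V$/$H$ split needed for the case analysis. The paper carries this out by splitting on whether the label $i$ being switched satisfies $i\leq k$ or $i>k$; in each case there is one potentially problematic local configuration, which is ruled out (or harmlessly reinterpreted) using that the intermediate shape is skew.

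Your second, preferred, approach via doubling has a genuine gap. The sentence ``the {\tt LIS} cannot be longer because the entries only go up to $t$'' is about $T$, not $\widetilde T$: the entries of $\widetilde T$ run up to $2t$, so the trivial bound on $\LIS(\widetilde T)$ is $2t$, not $t$. You would need an independent argument that no strictly increasing subsequence in the reading word of $\widetilde T$ exceeds length $t$, and the interleaving of odd entries (in $\Lambda$) and even entries (in $\trLambda$) across rows makes this nontrivial — in effect it would require the same kind of structural analysis of the $t$-Pieri ribbon that the first approach performs directly. Separately, the claim ``$V$ has only $t$ boxes'' is not justified: a $t$-Pieri filling may have strictly more than $t$ boxes (labels repeat), and you have not argued that $K$-rectification brings the count down to exactly $t$. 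Without that, knowing only that the first row of $\widetilde V$ has length $t$ does not force $V$ to be a single row — a shifted increasing tableau with first row $1,2,\dots,t$ can have further rows using labels in $\{3,\dots,t\}$. So the doubling route, as written, does not close.

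Finally, the ``induction on $t$'' framing at the top is never actually used in either approach; the argument that works is invariance under each {\tt switch}, not induction on $t$.
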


\begin{proof}
Choose a rectification order for $T$.
Note that the only $t$-Pieri filling (without $\bullet$'s) of a straight shape is
$S_{(t)}$. Hence it  is sufficient to
show that each {\tt switch} which occurs in the rectification, preserves
the property of being $t$-Pieri.

Let $R$ be some tableau in the sequence of tableaux produced by successive
{\tt switch}es; by induction, we assume that $R$ is $t$-Pieri.  Let
$R'$ be obtained from $R$ by {\tt switch}ing the $\bullet$'s and the
$i$'s.  We wish to show that $R'$ is also $t$-Pieri.

By Lemma \ref{Cliffa},
we know that $R$ can be described as the union of a vertical strip with
entries $1$ to $k$ and a horizontal strip with entries $k+1$ to $t$, together
with some $\bullet$'s.
We
split into two cases, depending on whether or not $i\leq k$.

\noindent
{\bf Case I}: $i\leq k$.  The {\tt switch} affects only $V$.  The only problem
that could occur would be if there were a $\bullet$ to the left of an $i$, and
an $i+1$ (also in $V$) in the same column as the $i$ and below it.
It is straightforward to verify that since the shape of $R$ is skew,
this is only possible in the following configuration, where the box below the $\bullet$ is not in $\Lambda$ (more explicit details are available in the proof of \cite[Lemma~3.1.4]{Clifford}):
$$\ktableau{\bullet&i\\&i+1}.$$
In this case, no other box labeled $i+1$ can occur in $V$, and no box
with a larger label can occur in $V$.  Therefore $i+1=k$.  Define $V'$
to consist of the boxes of $R'$ labeled at most $k-1$, and define
$H'$ to consist of the boxes of $R'$ labeled at least $k$.  This decomposition
shows that $R'$ is $t$-Pieri.

\noindent{\bf Case II}: $i>k$.  The switch affects only $H$.  The only
problem that could be is if there were a $\bullet$ above $i$, and
an $i+1$ in the same row, while the box to the right of the $\bullet$ is not in $R$, as depicted below:
$$\ktableau{\bullet&\\
             i&i+1}.$$

However, this is impossible, since the shape of $R$ is skew.
\end{proof}

For $\nu/\lambda$ a skew shape in $\Lambda$, write
$(\nu/\lambda)^\vee$ for the shape obtained by reflecting
$\nu/\lambda$ in the main (southwest to northeast) antidiagonal in $\Lambda$.
For an increasing filling $R$ of $\Lambda$, define
$R^\vee$ to be the filling of shape $({\tt shape}(R))^\vee$
obtained by filling the positions corresponding to entries $i$ in $R$
with entries $\max R +1 -i$.  Note that this operation takes increasing fillings
to increasing fillings.

\begin{Lemma} \label{lemma:rotating}
If $T$ is an increasing filling then $T^\vee$ is an increasing filling,
Moreover if $T$ is $t$-Pieri then so is $T^\vee$.
\end{Lemma}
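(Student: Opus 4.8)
The statement has two halves. The first, that $T^\vee$ is again increasing, is the observation already recorded in the sentence defining the operation $R \mapsto R^\vee$, so I would dispense with it in a line: reflecting in the antidiagonal reverses the roles of rows and columns, and the relabeling $i \mapsto \max R + 1 - i$ is an order-reversing bijection on entries, so a strictly increasing row (resp.\ column) of $T$ becomes a strictly increasing column (resp.\ row) of $T^\vee$. The content is entirely in the second half: if $T$ is a $t$-Pieri filling then so is $T^\vee$.

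\medskip

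For the second half, the natural approach is to track what the antidiagonal reflection does to the two strips in the definition of a $t$-Pieri filling. Suppose $T = V \sqcup H$ as in the definition, with $V$ a vertical strip whose entries increase from northeast to southwest (repeats allowed, in different columns) and $H$ a horizontal strip whose entries increase from southwest to northeast (repeats in different rows), and every entry of $V$ at most every entry of $H$. Write $r = \max T$. I claim $V^\vee$ (the image of the boxes of $V$, relabeled) is a \emph{horizontal} strip for $T^\vee$ and $H^\vee$ is a \emph{vertical} strip for $T^\vee$, with the inequalities arranged so that $H^\vee$'s entries are all at most $V^\vee$'s entries. Indeed, a vertical strip (no two boxes in a row; each box weakly west of the ones above it) reflects across the antidiagonal to a horizontal strip (no two boxes in a column; each box weakly north of the ones to its west) — this is immediate from the definitions of vertical and horizontal strip quoted in the text, since the reflection interchanges ``row'' with ``column'' and ``north/south'' with ``east/west''. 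The relabeling $i \mapsto r+1-i$ reverses the order of entries, so ``increasing from northeast to southwest'' along $V$ becomes ``increasing from southwest to northeast'' along the reflected boxes read in the reflected direction, which is exactly the monotonicity demanded of a horizontal strip; the ``repeats allowed but in different columns'' condition on $V$ becomes ``repeats in different rows,'' again as required for a horizontal strip. Symmetrically for $H \mapsto H^\vee$. Finally, the inequality ``$V \le H$'' becomes ``$r+1 - (\text{entry of }V) \ge r+1 - (\text{entry of }H)$,'' i.e.\ the entries of $H^\vee$ are $\le$ those of $V^\vee$; so setting (new vertical strip) $:= H^\vee$ and (new horizontal strip) $:= V^\vee$ exhibits $T^\vee$ as a $t$-Pieri filling. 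One should also note that $T^\vee$ uses exactly the labels $1, \dots, t$ — this is because $T$ does and $i \mapsto t+1-i$ permutes $\{1,\dots,t\}$ — and that $T^\vee$ is a legitimate filling of a skew shape in $\Lambda$, which holds because the antidiagonal reflection is an automorphism of $\Lambda$.

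\medskip

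The one genuinely fiddly point — the step I expect to be the main obstacle — is the bookkeeping in the previous paragraph: being careful that ``weakly west of the boxes above it'' for a vertical strip really does reflect to ``weakly north of the boxes which are west of it'' for a horizontal strip, including the degenerate situations (a single box, or boxes sharing a diagonal), and that the direction in which one reads the strip to check monotonicity is correctly reversed by the relabeling. Since $T$ may contain $\bullet$'s, I would also remark that the $\bullet$-boxes of $T$, positioned southwest to northeast with no two in a row or column, reflect to a set of boxes again positioned southwest to northeast with no two in a row or column, so the $\bullet$-clause of the definition of a $t$-Pieri filling with $\bullet$'s is preserved as well. None of this requires computation; it is purely a matter of stating the reflection's effect on each clause of the definition and checking they line up.
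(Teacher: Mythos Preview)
Your proposal is correct and follows essentially the same argument as the paper: both observe that the antidiagonal reflection swaps rows with columns while the relabeling $i\mapsto t+1-i$ reverses order, so the horizontal strip of $T$ becomes the vertical strip of $T^\vee$ and vice versa, with the inequality between their label sets reversed accordingly. Your treatment is somewhat more detailed (you explicitly track the ``repeats in different rows/columns'' clauses and the $\bullet$ case), but the underlying idea is identical.
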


\begin{proof}
For the first assertion suppose $x$ and $y$ are two adjacent boxes in $T$
with $x$ to the left of $y$. Then the label of $x$ is strictly smaller than the
one for $y$. In $T^\vee$, boxes $x$ and $y$ correspond to boxes $x'$ and $y'$
where
$y'$ is north of $x'$, but by construction, the label of $y'$ is now strictly
smaller than the label of $x'$. A similar argument holds if $x$ is north of $y$. The first assertion then follows.

For the second claim, consider the horizontal strip $H'$ using labels $k,k+1,\ldots,t$
that comes from the fact that $T$ is $t$-Pieri. This strip increases from southwest to northeast. Under the reflection, it is sent to a vertical strip, and each
label $\ell$ is replaced by $t-\ell+1$. Hence in $T^\vee$ we have a
vertical strip on the labels $t-k+1,t-k,\ldots,3,2,1$ that decreases from southwest
to northeast. Similarly, the vertical strip in $T$ guaranteed by $T$ being $t$-Pieri
corresponds to a horizontal strip in $T^\vee$ on the labels $t-k+1,t-k+2,\ldots,t$. Thus it follows that $T^\vee$ is $t$-Pieri.
\end{proof}

\begin{Lemma} \label{lem:pieri} If $T$ is an increasing filling of a skew shape in $\Lambda$ which
rectifies to $S_{(t)}$ for some rectification
order, then $T$ must be a $t$-Pieri filling.
\end{Lemma}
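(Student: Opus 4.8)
The plan is to prove the contrapositive-flavored converse to Lemma~\ref{lem:goodrect}: if $T$ rectifies to $S_{(t)}$ for \emph{some} order, then $T$ is $t$-Pieri. The first reduction is to observe that it suffices to show the property ``is a $t$-Pieri filling'' propagates \emph{backwards} along a single {\tt switch}: if $R'$ (with $\bullet$'s) is obtained from $R$ by a {\tt switch} step of the chosen rectification, and $R'$ is $t$-Pieri, then so is $R$. Since the terminal tableau $S_{(t)}$ is manifestly $t$-Pieri (it is a single row, a degenerate horizontal strip), running this backwards-induction from the end of the rectification to $T$ gives the claim. So the entire content is a local, one-{\tt switch} statement.

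For that local statement I would argue exactly as in the proof of Lemma~\ref{lem:goodrect} but with the roles of $R$ and $R'$ swapped. Assume $R'$ is $t$-Pieri; by Lemma~\ref{Cliffa} write $R'$ as a vertical strip $V'$ carrying labels $1,\dots,k$ together with a horizontal strip $H'$ carrying labels $k+1,\dots,t$, plus $\bullet$'s. The {\tt switch} that produced $R'$ from $R$ interchanged $\bullet$'s with the symbol $i$ along a union of alternating ribbons; since {\tt switch} is an involution, $R$ is obtained from $R'$ by the same {\tt switch}. Split into the cases $i \le k$ and $i > k$ according to whether the $i$'s lie in $V'$ or in $H'$, just as before. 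In each case the {\tt switch} moves the $i$-labeled boxes within a single strip (vertical, resp.\ horizontal), and one checks that the only way the resulting $R$ could fail to be expressible as ``vertical strip of $1,\dots,k'$ below horizontal strip of $k'+1,\dots,t$ for some $k'$'' would require a forbidden local configuration (an $i$ directly north of an $i+1$ inside what is supposed to be a horizontal strip, or two equal labels in one row/column) — and skewness of the shape of $R$, together with the fact that before the {\tt switch} the $\bullet$'s sat in a southwest-to-northeast antichain, rules this out. One also needs to confirm the inter-strip inequality (entries of the vertical strip $\le$ entries of the horizontal strip) and the ``$\bullet$'s southwest-to-northeast, no two in a row or column'' condition are restored after pulling the {\tt switch} back; the $\bullet$ positions before the step are recovered from those after by the involution, so their antichain property is automatic.

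The main obstacle is the same case-analysis subtlety that appears in Case~I of Lemma~\ref{lem:goodrect}: when $i \le k$ and the {\tt switch} acts inside the vertical strip, a $\bullet$ can end up adjacent to an $i$ with an $i+1$ stacked below in the same column, which would break the strip decomposition unless $i+1 = k$, i.e.\ unless no larger label and no other $i+1$ appears in $V'$. Running the argument backwards I must verify that the $t$-Pieri hypothesis on $R'$ forces precisely this numerical coincidence, so that the pre-image $R$ can be re-decomposed with the boundary $k'$ shifted by one. Alternatively — and this is probably cleaner — one can sidestep the direct backwards induction entirely: by Lemma~\ref{lemma:rotating}, $T$ is $t$-Pieri iff $T^\vee$ is, and by Lemmas~\ref{lem:goodrect} and~\ref{lemma:rotating} the set of tableaux rectifying to $S_{(t)}$ is not obviously $\vee$-stable, so that route needs care too; I would fall back on the explicit local check, citing \cite[Lemma~3.1.4]{Clifford} for the finicky configuration verification exactly as Lemma~\ref{lem:goodrect} does.
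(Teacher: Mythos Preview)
Your primary plan---running the single-{\tt switch} preservation argument of Lemma~\ref{lem:goodrect} backwards---is plausible but not the symmetric one-liner you suggest. The forward case analysis in Lemma~\ref{lem:goodrect} pins down the problematic local configurations using that the $\bullet$'s sit as \emph{inner} corners relative to the labels $1,\dots,i-1$ already processed; in the backward direction the $\bullet$'s have been pushed to outer positions and the labels already handled are $i+1,\dots,t$, so the ``impossible because the shape of $R$ is skew'' step does not carry over verbatim. You would genuinely have to redo the local check rather than simply swap $R$ and $R'$, and you haven't done so.

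More importantly, the alternative you mention and then abandon is exactly what the paper does, and your reason for abandoning it is a misreading of what is required. The paper does \emph{not} need that the set ``rectifies to $S_{(t)}$'' is $\vee$-stable. What it uses is the elementary fact that reversing a forward $K$-rectification of $T$ to $S_{(t)}$ is the same, after applying $\vee$, as a \emph{forward} partial $K$-rectification of $(S_{(t)})^\vee$ to $T^\vee$. Since $(S_{(t)})^\vee$ is $t$-Pieri by Lemma~\ref{lemma:rotating}, the forward-preservation statement already proved inside Lemma~\ref{lem:goodrect} forces $T^\vee$ to be $t$-Pieri, and Lemma~\ref{lemma:rotating} again gives that $T$ is $t$-Pieri. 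This is a three-line argument that reuses Lemma~\ref{lem:goodrect} wholesale instead of reproving a reversed version of it; the key step you were missing is that $\vee$ converts reverse slides into forward slides.
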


\begin{proof}
Suppose otherwise, that there exists a filling $T$ that is not $t$-Pieri,
but rectifies to $S_{(t)}$.
Following the reverse of the rectification order that takes $T$
to $S_{(t)}$, clearly one can partially rectify
$(S_{(t)})^\vee$ to $T^\vee$.

By Lemma~\ref{lemma:rotating}
$(S_{(t)})^\vee$ is $t$-Pieri while
$T^\vee$ is not $t$-Pieri.
This violates the proof of
Lemma \ref{lem:goodrect} which says that any sequence of slides applied to
a $t$-Pieri filling is still $t$-Pieri.
\end{proof}

Together, these two lemmas imply the following theorem.

\begin{Theorem} \label{thm:REU}
If an increasing tableau $T$
is a $t$-Pieri filling then it rectifies to
$S_{(t)}$ for any $K$-rectification order.
Moreover, if $T$ is an increasing filling of a skew shape in $\Lambda$ that
rectifies to $S_{(t)}$ for some $K$-rectification
order, then $T$ must be a $t$-Pieri filling.
\end{Theorem}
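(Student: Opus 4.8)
The plan is short: the two assertions of Theorem~\ref{thm:REU} are, respectively, nothing other than Lemma~\ref{lem:goodrect} and Lemma~\ref{lem:pieri}, so the proof consists of citing them. For the forward direction, an increasing tableau carries no $\bullet$'s, so the hypothesis of Lemma~\ref{lem:goodrect} is met, and that lemma already delivers the conclusion that $T$ rectifies to $S_{(t)}$ under \emph{any} $K$-rectification order. For the converse, Lemma~\ref{lem:pieri} states verbatim that an increasing filling of a skew shape in $\Lambda$ which rectifies to $S_{(t)}$ for some order must be $t$-Pieri. Combining the two gives the stated ``if and only if'' characterization of $t$-Pieri fillings among increasing fillings of skew shapes in $\Lambda$ in terms of their rectification behaviour.

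Where the real work sits, and hence what one should be prepared to grind through, is inside the two supporting lemmas rather than in Theorem~\ref{thm:REU} itself. In Lemma~\ref{lem:goodrect}, the key step is the inductive claim that a single {\tt switch} preserves the $t$-Pieri property; via Lemma~\ref{Cliffa} one writes the current tableau $R$ as a vertical strip (labels $1,\dots,k$) glued to a horizontal strip (labels $k+1,\dots,t$) plus some $\bullet$'s, and then checks the two cases $i\le k$ and $i>k$. The only way the property could break is a bad local configuration $\ktableau{\bullet&i\\&i+1}$ or $\ktableau{\bullet&\\i&i+1}$; the point is that skewness of the shape, together with the fact that the box below or to the right of the $\bullet$ lies outside $\Lambda$ in these configurations, forces $i+1=k$ in the first case and rules the second out entirely, after which one re-subdivides into a new $V'$ and $H'$. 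This is the step I expect to be the main obstacle, precisely because it requires a careful case analysis of how {\tt switch} interacts with the shifted geometry near the diagonal.

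Lemma~\ref{lem:pieri} then bootstraps off Lemma~\ref{lem:goodrect} by a duality trick: one reflects in the antidiagonal. By Lemma~\ref{lemma:rotating} the operation $R \mapsto R^\vee$ takes increasing fillings to increasing fillings and $t$-Pieri fillings to $t$-Pieri fillings, and reversing a rectification order that carries $T$ to $S_{(t)}$ exhibits a partial rectification of $(S_{(t)})^\vee$ to $T^\vee$. If $T$ were not $t$-Pieri, then $T^\vee$ would not be $t$-Pieri either, yet $(S_{(t)})^\vee$ \emph{is} $t$-Pieri, contradicting the invariance of the $t$-Pieri property under slides established inside the proof of Lemma~\ref{lem:goodrect}. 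With these two lemmas in hand, Theorem~\ref{thm:REU} is immediate.
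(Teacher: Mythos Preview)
Your proposal is correct and matches the paper's approach exactly: the paper simply says ``Together, these two lemmas imply the following theorem,'' referring to Lemma~\ref{lem:goodrect} and Lemma~\ref{lem:pieri}. Your additional exposition of the internal workings of those lemmas is accurate but goes beyond what is needed for the proof of Theorem~\ref{thm:REU} itself.
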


In order to state the Pieri rule for $OG(n,2n+1)$ given in \cite[Corollary~4.8]{Bu}, we need
one further definition from their paper.
Suppose $\nu/\lambda$ is a ribbon. Then a {\bf KOG-tableau} of shape $\nu/\lambda$ is a tableau in ${\tt INC}(\nu/\lambda)$ with the additional constraint that the label of
any box $b$ is:
\begin{itemize}
\item[(A)] either smaller than or equal to all the boxes southwest of it, or
\item[(B)] greater than or equal to all the boxes southwest of it.
\end{itemize}
Using this definition, their Pieri rule states
$C_{\lambda,(t)}^{\nu}$ equals $(-1)^{|\nu/\lambda|-t}$ times the number of KOG-tableaux
with shape $\nu/\lambda$ and labels $1,2,\ldots,t$.

Thus, to connect the Buch-Ravikumar rule to our conjectural rule, it remains to prove
the following:

\begin{Lemma}\label{translation} The set of
$t$-Pieri tableaux (without $\bullet$'s) of shape $\nu/\lambda$ equals the set of {\rm KOG}-tableaux of
the same shape that use the labels $1,2,\ldots,t$.
\end{Lemma}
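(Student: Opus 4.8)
The plan is to prove the two set inclusions separately, exploiting the structural decomposition of a $t$-Pieri filling into a vertical strip $V$ (labels $1,\dots,k$) and a horizontal strip $H$ (labels $k+1,\dots,t$) supplied by Lemma~\ref{Cliffa}. Since we are in the case without $\bullet$'s, a $t$-Pieri filling of $\nu/\lambda$ has ribbon shape, which is exactly the shape-restriction built into the definition of a KOG-tableau, so the two conditions can be compared box-by-box.

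First I would show that every $t$-Pieri filling is a KOG-tableau. Fix a box $b$ with label $\ell$. If $\ell\le k$ then $b\in V$; I claim every box southwest of $b$ also has label $\ge\ell$ or the ribbon geometry forces it into a controlled position — more carefully, one argues that along the ribbon, moving southwest from $b$ one encounters only boxes of $V$ until (possibly) crossing into $H$, and within $V$ the entries increase from northeast to southwest, hence are $\ge\ell$, which is condition~(B); once one enters $H$ the labels are $>k\ge\ell$, still~(B). If instead $\ell>k$ then $b\in H$; the boxes southwest of $b$ along the ribbon are either earlier boxes of $H$ (labels $\le\ell$, since $H$ increases southwest-to-northeast), giving condition~(A), or boxes of $V$ with labels $\le k<\ell$, again~(A). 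The only thing to check carefully is that one cannot have a box of $H$ strictly southwest of a box of $V$ in a way that mixes the two cases; this follows from clause~(iii) of the $t$-Pieri definition together with the strips being genuine horizontal/vertical strips inside a ribbon.

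Conversely I would show a KOG-tableau $T$ using labels $1,\dots,t$ on a ribbon is $t$-Pieri. Here the natural candidate for the cut is $k := $ the largest label that occurs in a box satisfying condition~(B) but not lying weakly northeast of all larger-labeled boxes — more robustly, I would define $V$ to be the set of boxes $b$ such that every box southwest of $b$ has a strictly larger label (forcing a column-strip that increases southwest-to-northeast downward, i.e. a vertical strip), and $H$ the complement. One then checks: (a) $V$ is a vertical strip with entries increasing northeast-to-southwest, (b) $H$ is a horizontal strip with entries increasing southwest-to-northeast, (c) $\max V \le \min H$. Properties (a) and (b) come directly from dichotomy (A)/(B) applied along the ribbon and the fact that strict increase along rows/columns already holds (the tableau is increasing); property (c) is where the ribbon-connectivity is used: any box of $H$ is connected to any box of $V$ through the ribbon, and tracing that path while invoking (A)/(B) at each step forces the inequality.

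The main obstacle I expect is (c), i.e. showing the vertical strip's labels really do not exceed the horizontal strip's labels — the KOG conditions (A) and (B) are only about comparisons with boxes \emph{southwest} of a given box, so relating a $V$-box and an $H$-box that are not southwest-comparable requires walking along the ribbon and carefully bookkeeping which of (A)/(B) applies at each turn (a $2\times 2$-free, at-most-two-per-row-and-column shape has a very constrained local structure, so each step is a small case analysis, but there are several cases). A secondary subtlety is the boundary of $\Lambda$ — as in the proofs of Lemmas~\ref{lem:goodrect} and~\ref{lemma:rotating}, certain configurations that would otherwise obstruct the argument are ruled out precisely because the relevant neighboring box would fall outside $\Lambda$; I would handle this exactly as there, by reducing to the non-shifted picture where the skewness of the shape kills the bad configuration, and I would cite \cite[Lemma~3.1.4]{Clifford} for the fully explicit local verification.
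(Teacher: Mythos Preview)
Your forward inclusion ($t$-Pieri $\subseteq$ KOG) is essentially the paper's argument, but you have systematically swapped the labels (A) and (B): when the boxes southwest of $b$ carry labels $\ge\ell$, the label of $b$ is \emph{smaller} than or equal to them, which is condition~(A), not~(B); and dually for the horizontal-strip case. Fix this and the argument matches the paper's.

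For the converse you take a genuinely different route. The paper does not define $V$ and $H$ via the KOG dichotomy; instead it iteratively determines the cut value $k$ by scanning labels $1,2,\dots$ and asking, for each $j$, whether every $(j+1)$ lies weakly northeast of the southwest-most $j$; once $k$ is found it takes $V$ to be the boxes carrying labels $1,\dots,k-1$ together with the appropriate $k$'s, verifies directly that this is the required vertical strip, and checks the remaining boxes form the horizontal strip $H$. Your proposal---taking $V$ to be the boxes $b$ for which every southwest box carries a strictly larger label---also works and is arguably cleaner, but to complete it you need the observation (which your phrase ``walking along the ribbon'' uses implicitly) that on a ribbon the southwest partial order is \emph{total}: two southwest-incomparable boxes in a skew shape force the full rectangle between them into the shape, hence a $2\times 2$. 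With totality, your $V$ is exactly the set of strict running minima of the label sequence read southwest to northeast, and the verifications become short: for (a), two $V$-boxes in the same row would have labels increasing left to right, contradicting that the later one is a new minimum; for (c), if $b_q\in H$ then by KOG either $\ell_q$ is a weak running maximum or equals the running minimum, and in either case $\ell_q\ge\ell_p$ for every earlier $b_p\in V$, while for later $b_p\in V$ one has $\ell_p<\ell_q$ by definition of running minimum. So the obstacle you flagged dissolves without the delicate case analysis you anticipated.

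Your closing concern about the boundary of $\Lambda$ and the citation of \cite[Lemma~3.1.4]{Clifford} are misplaced for this particular lemma: the statement is a purely combinatorial comparison of two conditions on a fixed ribbon and does not interact with the staircase boundary at all.
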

\begin{proof}
Suppose $T$ is a $t$-Pieri tableau for some value of $k$ as given in Lemma~\ref{Cliffa}. Now consider
any box $b$ of $T$. If it is on the vertical strip using the labels $k,k-1,\ldots,3,2,1$
from the definition of $T$ being $t$-Pieri, then (A) holds with respect to all
the other labels southwest of it and on that vertical strip. Also, any
label on the horizontal strip is of size $k$ or larger, so (A) clearly holds with
respect to them as well. On the other hand, suppose $b$ is on the horizontal strip
using labels $k,k+1,\ldots,t-1,t$.  Then (B) holds with respect to any other label
southwest of it and on that strip. Also, (B) clearly holds with respect to any label
on the vertical strip, since that other label is of size at most $k$. Hence $T$ is a
KOG-tableau.

Conversely, now suppose $T$ is a KOG-tableau with the stated assumptions. Initially
set $k=\infty$. Consider
the the southwest most label $1$ that appears in $T$. If all labels $2$ appear
weakly northeast of that $1$ then set $k=1$. Otherwise, look at the southwest most $2$ that appears. If all labels $3$ are weakly northeast of that $2$ set $k=2$.
Repeat this process until $k<\infty$ or until we have exhausted all labels, at which point set $k=t$.

We claim that $T$ is $t$-Pieri. Consider the labels $1,2,\ldots,k-1$. By the fact $T$ is a KOG-tableau, and by our construction of $k$, these labels form a vertical
strip that decreases from southwest to northeast. At least one $k$ appears southwest
of the southwest most $k-1$. Then the labels $1,2,\ldots,k-1$ together with all such
labels $k$ form the desired vertical strip from the definition of $t$-Pieri tableau.
In particular, the southwest most $k$ that appears in $T$ is part of the vertical
strip.

We begin a subtableau $H$ with that $k$ together with any $k$'s
that lie weakly northeast of the southwestmost $k-1$ we previously considered and
all labels $k+1,k+2,\ldots,t$ from $T$.
Note that the $k+1$'s must all lie weakly northeast of the northeastmost of these
$k$'s, by (B). Similarly, the $k+2$'s must all lie weakly northeast of the northeastmost
of the $k+1$'s etc. Hence $H$ is in fact a horizontal strip and $T$ is a $t$-Pieri tableau, as desired.
\end{proof}

In \cite{Bu} it is stated that I.~Feigenbaum--E.~Sergel have given a proof
(unpublished) of the following
corollary. We present a proof that we obtained independently:

\begin{Corollary}
\label{cor:REUclaim}
$C_{\lambda,\rho}^{\nu}$
is correctly computed by the rule of Theorem~\ref{thm:main2}
whenever $\rho=(t)$.
\end{Corollary}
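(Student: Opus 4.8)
The plan is to combine the two rules that are now established in the Pieri setting. Theorem~\ref{thm:REU} characterizes, purely combinatorially, the increasing fillings of a skew shape $\nu/\lambda$ in $\Lambda$ that rectify to $S_{(t)}$ under \emph{some} (equivalently, by Theorem~\ref{thm:REU}, \emph{any}) $K$-rectification order: they are exactly the $t$-Pieri fillings. On the other hand, the Buch--Ravikumar rule of \cite[Corollary~4.8]{Bu} expresses $C_{\lambda,(t)}^{\nu}$ as $(-1)^{|\nu/\lambda|-t}$ times the number of {\rm KOG}-tableaux of shape $\nu/\lambda$ using labels $1,\dots,t$. Lemma~\ref{translation} identifies these two sets of tableaux. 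So the corollary should follow by stringing these three facts together.

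Concretely, I would argue as follows. The rule of Theorem~\ref{thm:main2}, specialized to $\mu=\rho=(t)$, computes a number equal to $(-1)^{|\nu|-|\lambda|-t}$ times the number of shifted increasing tableaux of shape $\nu/\lambda$ that rectify to $S_{(t)}$ for some $K$-rectification order; by Theorem~\ref{thm:REU} this count equals the number of $t$-Pieri fillings (without $\bullet$'s) of shape $\nu/\lambda$. First I would note that a $t$-Pieri filling is forced to have ribbon shape (as remarked in the text just before Lemma~\ref{Cliffa}), which legitimizes applying Lemma~\ref{translation}; indeed if $\nu/\lambda$ is not a ribbon then both counts are zero and there is nothing to prove, while if it is a ribbon, Lemma~\ref{translation} gives a bijection between $t$-Pieri fillings of shape $\nu/\lambda$ and {\rm KOG}-tableaux of shape $\nu/\lambda$ with labels $1,\dots,t$. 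Hence the count produced by Theorem~\ref{thm:main2} equals the number of such {\rm KOG}-tableaux.

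It then remains only to reconcile the signs: Theorem~\ref{thm:main2} carries the factor $(-1)^{|\nu|-|\lambda|-t}$ while Buch--Ravikumar carries $(-1)^{|\nu/\lambda|-t}$, and since $|\nu/\lambda| = |\nu|-|\lambda|$ these are identical. Therefore the number assigned to $C_{\lambda,(t)}^{\nu}$ by Theorem~\ref{thm:main2} coincides with $(-1)^{|\nu/\lambda|-t}$ times the number of {\rm KOG}-tableaux of shape $\nu/\lambda$ with labels $1,\dots,t$, which is exactly $C_{\lambda,(t)}^{\nu}$ by \cite[Corollary~4.8]{Bu}. This completes the proof. I do not anticipate a genuine obstacle here: all the substantive work has been done in Theorem~\ref{thm:REU} and Lemma~\ref{translation}, and the only things to watch are the bookkeeping point that $t$-Pieri fillings are automatically ribbon-shaped (so that ``{\rm KOG}-tableau'' even makes sense) and the trivial identity of the two sign exponents.
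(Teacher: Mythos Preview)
Your proposal is correct and follows exactly the paper's approach: the paper's proof is the single sentence ``This holds by Theorem~\ref{thm:REU}, Lemma~\ref{translation} and \cite[Corollary~4.8]{Bu} combined,'' and you have spelled out precisely that combination, together with the harmless bookkeeping about ribbon shapes and the sign exponent.
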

\begin{proof}
This holds by Theorem~\ref{thm:REU}, Lemma~\ref{translation} and
\cite[Corollary~4.8]{Bu} combined.
\end{proof}

\subsection{Proof of Theorem~\ref{thm:main}}
Let $T$ be a tableau in $\Lambda$ that rectifies to a
superstandard tableau of shape $\mu$ with respect to some $K$-rectification
order $S$.

The proof is by induction on the size of $T$.

Consider some alternative $K$-rectification order $R$.
Since $T$ rectifies to superstandard with respect to $S$, it contains a
$\mu_1$-Pieri subtableau $A$, by the second statement of Theorem~\ref{thm:REU}. In particular, it is easy to see that we can
assume that $A$ contains all the boxes of $T$ that use the labels $1,2,\ldots,\mu_1$.
By the first statement of Theorem~\ref{thm:REU}, $A$ rectifies to the row
$(\mu_1)$ with respect to $R$ also.   Let $B=T\setminus A$,
and
$K{\tt infusion}(R,A)=(S_{(\mu_1)},\hat R)$.
By \cite[Lemma~3.3]{TY5} (adapted to the shifted setting, with the same proof),
since the labels used in $A$ and $B$ form consecutive, disjoint intervals,
one can split the computation of the $K$-rectification of $A$ with respect to $R$
and the remaining $K$-rectification of $B$ with respect to $\hat R$.

By Lemma \ref{lem:double} we know that $\tT$ rectifies to a tableau of shape $\tmu$ under the
$K$-rectification order ${\widetilde S}$.
By Theorem \ref{thm:sixone}, $K$-rectifying $\tT$ with respect to any order will
result in a shape whose first row has the same length as does $\tmu$, namely,
$\mu_1$.  Thus,
in particular, the further $K$-rectification of $B$ with respect to $\hat R$ will
not add any elements to $S_{(\mu_1)}$.  Therefore, we can forget about the
first row of $\Lambda$.
We know that $B$ rectifies to superstandard (on the labels starting with $\mu_1 +1$)
with respect to some order (since $T$ does).  By induction, $B$ rectifies to
superstandard with respect to $\hat R$, and we are done.

\subsection{Proof of Theorem~\ref{thm:main2}}
We will need:

\begin{Lemma}
\label{lemma:crucial}
Let
$D_{\lambda\mu}^\nu$ be a collection of integers indexed by triples of
shapes such that:

(A) Taking these integers as structure constants on
$\mathbb Z {\mathbb Y}_\Lambda$, by declaring:

$$[\lambda]\star [\mu]=\sum_{\nu} D_{\lambda\mu}^{\nu}[\nu]$$
we obtain a commutative and associative ring. Here $\mathbb Z {\mathbb Y}_{\Lambda}$ denotes the ${\mathbb Z}$-module formally spanned by
all straight shapes $\lambda\subseteq \Lambda$.

(B) $D_{\lambda\rho}^\nu=C_{\lambda\rho}^\nu$ for any $\rho=(t)$.

Then $D_{\lambda\mu}^\nu=C_{\lambda\mu}^\nu$ for all $\lambda,\mu,\nu$.
\end{Lemma}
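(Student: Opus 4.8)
The plan is to prove Lemma~\ref{lemma:crucial} by reducing the equality $D=C$ to the Pieri case (B) via a generating-function/straightening argument that exploits associativity (A). The key observation is that the ring $K^0(X)$ is generated as a $\mathbb{Z}$-algebra by the special Schubert classes $[\calO_{X_{(t)}}]$ for $1 \le t \le n$; this is the $K$-theoretic analogue of the fact that the cohomology ring of $OG(n,2n+1)$ is generated by the special Schubert classes, and it follows because the $\LIS$-type/triangularity of the Pieri rule lets one express any $[\calO_{X_\lambda}]$ as a polynomial in the $[\calO_{X_{(t)}}]$'s with integer coefficients (one inverts a unitriangular change of basis with respect to a suitable partial order on shapes, e.g. refining by $|\lambda|$ and then by dominance). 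So first I would establish: there exist universal integer polynomials $P_\lambda$ such that $[\calO_{X_\lambda}] = P_\lambda\big([\calO_{X_{(1)}}],\dots,[\calO_{X_{(n)}}]\big)$ in $K^0(X)$, and moreover the $P_\lambda$ depend only on the Pieri constants $C^\nu_{\lambda,(t)}$ (since those are exactly what one uses to perform the straightening).

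Second, I would run the same straightening inside the abstract ring $\mathbb{Z}\mathbb{Y}_\Lambda$ equipped with the product $\star$ determined by $D$. Using hypothesis (B), the Pieri constants for $\star$ coincide with the true ones, so the \emph{same} polynomials $P_\lambda$ satisfy $[\lambda] = P_\lambda([ (1)],\dots,[(n)])$ in $(\mathbb{Z}\mathbb{Y}_\Lambda,\star)$ — here associativity and commutativity of $\star$ are exactly what is needed to make sense of evaluating a polynomial and to carry out the inductive straightening step (each step multiplies the current expression by some $[(t)]$ and rewrites using the Pieri rule, and without associativity the resulting element would be ill-defined). Then for any $\lambda,\mu$ we compute $[\lambda]\star[\mu] = P_\lambda([(1)],\dots)\star P_\mu([(1)],\dots)$; expanding the product of these two polynomials and repeatedly applying the Pieri rule (hypothesis (B) again) re-expresses this in the $[\nu]$ basis, and the coefficients obtained are, by construction, exactly the coefficients one gets by doing the identical formal manipulation in $K^0(X)$ — i.e. $D^\nu_{\lambda\mu} = C^\nu_{\lambda\mu}$.

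To organize the induction cleanly I would phrase it as: for every pair $(\lambda,\mu)$, $D^\nu_{\lambda\mu}=C^\nu_{\lambda\mu}$ for all $\nu$, proved by induction on $|\mu|$ (or on the length/largest part of $\mu$). The base case $\mu=(t)$ is (B). For the inductive step, pick any corner of $\mu$ so that, using the Pieri rule, $[(t)]\star[\mu'] = [\mu] + \sum (\text{lower terms in some order})$ for suitable $t$ and $\mu'$ strictly smaller than $\mu$; then associativity gives $[\lambda]\star[(t)]\star[\mu'] = [\lambda]\star[\mu] + \sum[\lambda]\star(\text{lower terms})$, and since $[\lambda]\star[(t)]$ is known by (B) to equal the true product (it is again a $\mathbb{Z}$-combination of $[\nu]$'s matching $K^0(X)$), and each $[\lambda]\star(\text{lower terms})$ is known by the inductive hypothesis, we solve for $[\lambda]\star[\mu]$ and match it against the identical identity in $K^0(X)$. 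This is essentially the proof of \cite[Lemma~5.1]{TY5} transported to the present $\Lambda$, and I would remark that the argument is formally identical once one knows the special classes generate; only the combinatorial model for the Pieri coefficients has changed.

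The main obstacle is making the ``special Schubert classes generate, with the generation data depending only on the Pieri constants'' step airtight in a way that uses \emph{only} (A) and (B) and nothing about $D$ beyond them. Concretely, one must be careful that the straightening algorithm terminates and that at each stage the ``lower order terms'' really are strictly lower in a fixed well-founded order on shapes contained in $\Lambda$, so that the induction is legitimate; the natural order is $\nu \prec \nu'$ if $|\nu| < |\nu'|$, or $|\nu|=|\nu'|$ and $\nu$ precedes $\nu'$ in dominance — one must check the Pieri rule (equivalently, the $\bullet$-free $t$-Pieri fillings of Theorem~\ref{thm:REU}) is indeed unitriangular with respect to it, i.e. that $C^\nu_{\lambda,(t)} \ne 0$ forces $\nu \succeq$ the minimal such $\nu$, with coefficient $1$ on that minimal term. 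This is a finite, purely combinatorial verification about ribbon/$t$-Pieri fillings, and once it is in place the rest of the argument is formal ring theory driven by associativity.
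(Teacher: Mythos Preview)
Your overall strategy matches the paper's: both reduce the lemma to the fact that the special classes $[{\mathcal O}_{X_{(t)}}]$ generate $K^0(X)$ as a ring, after which the argument of \cite[Lemma~5.1]{TY5} transfers verbatim using (A) and (B). The only substantive difference is in how that generation fact is established, and this is exactly where your writeup has a gap.

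The paper does not attempt a combinatorial proof of generation. It simply observes that the cohomology classes $[X_{(t)}]$ generate $H^\star(X)$ and invokes the standard filtration argument: since the associated graded of $K^0(X)$ with respect to the codimension filtration is $H^\star(X)$, any lifts of a generating set for $H^\star(X)$ generate $K^0(X)$. No combinatorics of the $K$-theoretic Pieri rule is needed here.

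Your proposed route via unitriangularity with respect to ``$|\nu|$ first, then dominance'' does not work as stated. The $K$-theoretic Pieri expansion of $[(t)]\star[\mu']$ contains terms $[\nu]$ with $|\nu|>|\mu'|+t=|\mu|$, and these are \emph{higher} in your order, not lower, so the identity $[(t)]\star[\mu']=[\mu]+(\text{lower terms})$ fails. Reversing the size order repairs that, but then $\mu'$ itself is no longer below $\mu$, so the inductive hypothesis for $[\sigma]\star[\mu']$ becomes unavailable. No single well-order on shapes handles both constraints simultaneously; the underlying mechanism is genuinely two-stage (treat the cohomological leading term first, then absorb the higher-degree $K$-theory corrections by descending induction on $|\nu|$), and that is precisely what the paper's filtration argument encodes. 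You rightly flagged this step as the main obstacle; the point is that the ``finite combinatorial verification'' you propose does not go through with the suggested order, whereas the paper's approach sidesteps the issue entirely. Once generation is secured, the remainder of your argument is correct and coincides with the paper's.
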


The above lemma is essentially the same as \cite[Lemma 5.1]{TY5}, which was stated in the setting that
$C_{\lambda,\mu}^{\nu}$ are the Schubert structure constants for the $K$-theory of
(ordinary) Grassmannians. The classes $[{\mathcal O}_{X_{(t)}}]$ can be
shown to
generate $K^{0}(X)$ as an algebra, using the fact that
the corresponding ordinary
cohomology classes $[X_{(t)}]$ generate $H^*(X)$ together with standard facts
about the relationship between $K^0(X)$ and $H^*(X)$.   Thus the same proof
as in \cite{TY5} implies the lemma in this setting also.

Having introduced the notions of $K{\tt rect}$ and $K{\tt infusion}$ in Section~2 (cf. \cite{TY5}), the proof
that (A) holds for the numbers $\{D_{\lambda,\mu}^{\nu}\}$ defined by the {\it jeu de taquin} rule,
is precisely the same as in \cite[Section~5]{TY5}. That part (B) holds
is exactly Corollary~\ref{cor:REUclaim}. Hence the theorem follows. \qed

\section{Structure constants for boundary ideal sheaves}

We first prove the following lemma which is well-known in the context
of classical {\it jeu de taquin}.
An analogous combinatorial fact
was proved by A.~Buch in the setting of $K$-theory of Grassmannians \cite{Buch:KLR}.

Recall that we write $\lambda^\vee$ for the partition obtained
by reflecting $\Lambda/\lambda$ across the antidiagonal or, equivalently,
the partition with distinct parts whose parts are the elements of $\{1,\dots,n\}$
missing from~$\lambda$.

\begin{Lemma}\label{technical} Let $\lambda,\mu$ be shapes in $\Lambda$.
There is a unique increasing filling of $\Lambda/\mu$ which rectifies to
$S_\lambda$ if $\mu=\lambda^\vee$, and otherwise there is none. In other words
$C_{\lambda,\mu}^{\Lambda}=1$ if $\mu=\lambda^{\vee}$ and is zero otherwise.
\end{Lemma}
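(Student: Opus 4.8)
\emph{Plan of proof.} The statement bundles a combinatorial assertion about increasing tableaux with its translation into $C^\Lambda_{\lambda\mu}$, and the translation is the easy part, so I would dispose of it first. Since $K^0(X)$ is a commutative ring, $C^\Lambda_{\lambda\mu}=C^\Lambda_{\mu\lambda}$, and Theorem~\ref{thm:main2} applied with the roles of $\lambda$ and $\mu$ exchanged identifies $C^\Lambda_{\mu\lambda}$ with $(-1)^{|\Lambda|-|\lambda|-|\mu|}$ times the number $N$ of increasing fillings of $\Lambda/\mu$ rectifying to $S_\lambda$. The parts of $\lambda$ and of $\lambda^\vee$ together run over $1,2,\dots,n$, so $|\lambda|+|\lambda^\vee|=1+2+\cdots+n=|\Lambda|$; hence the sign is $+1$ exactly when $\mu=\lambda^\vee$, and in that case $C^\Lambda_{\lambda\mu}$ equals the claimed count $1$, while for $\mu\ne\lambda^\vee$ the count $0$ forces $C^\Lambda_{\lambda\mu}=0$. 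Thus it remains to prove: there is a unique increasing filling of $\Lambda/\mu$ rectifying to $S_\lambda$ when $\mu=\lambda^\vee$, and none otherwise.

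Two reductions make the core manageable. First, if $|\mu|>|\Lambda|-|\lambda|$, then $\Lambda/\mu$ has strictly fewer boxes than $S_\lambda$, and since no $K{\tt jdt}$ slide ever increases the number of boxes, no filling of $\Lambda/\mu$ rectifies to $S_\lambda$; as $\mu\ne\lambda^\vee$ throughout this range there is nothing more to check there. Second, the $K{\tt infusion}$ involution (the shifted analogue of the corresponding statement of \cite{TY5}, valid \emph{mutatis mutandis} by Section~2) shows, via $U\mapsto K{\tt infusion}_2(S_\mu,U)$ and Theorem~\ref{thm:main} (to pass freely between $K$-rectification orders), that the set of $U\in{\tt INC}(\Lambda/\mu)$ rectifying to $S_\lambda$ is in bijection with the set of $V\in{\tt INC}(\Lambda/\lambda)$ rectifying to $S_\mu$; in particular $N$ is symmetric under $\lambda\leftrightarrow\mu$. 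Since exactly one of the partitions $\lambda,\lambda^\vee$ has largest part equal to $n$, and replacing $(\lambda,\mu)$ by $(\mu,\lambda)$ preserves both ``$\mu=\lambda^\vee$'' and its negation, we may assume $\lambda_1=n$.

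With $\lambda_1=n$ I would induct on $n$, peeling off the top row of $\Lambda$ exactly as in the proof of Theorem~\ref{thm:main}: if $U\in{\tt INC}(\Lambda/\mu)$ rectifies to $S_\lambda$, then by Theorem~\ref{thm:REU} it contains a $\lambda_1$-Pieri subtableau $A$ carrying all boxes labelled $1,\dots,\lambda_1=n$, and $A$ rectifies to the full top row $S_{(n)}$ for every order; splitting the rectification via \cite[Lemma~3.3]{TY5} (adapted to the shifted setting), the complement $B=U\setminus A$ rectifies, inside the smaller staircase formed by rows $2,\dots,n$ of $\Lambda$, to the superstandard tableau of shape $\lambda'=(\lambda_2,\lambda_3,\dots)$. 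A short shape computation relates the inner shape of $B$ there to $\mu$ and $A$; since $\lambda_1=n$ one has $(\lambda')^\vee=\lambda^\vee$ as partitions (the complement now taken in $\{1,\dots,n-1\}$), so feeding this into the induction hypothesis identifies $B$ uniquely and propagates a constraint on $\mu$. Unwinding the recursion down to the empty staircase, these constraints together with the size bound $|\mu|\le|\Lambda|-|\lambda|=|\lambda^\vee|$ from the first reduction should force $\mu=\lambda^\vee$ and leave exactly one possibility for $U$; conversely, building the tableau back up along the recursion produces the required $U$ when $\mu=\lambda^\vee$. The hard part, and the genuinely $K$-theoretic one, is precisely this bookkeeping: $\lambda_1$-Pieri subtableaux may carry repeated labels and hence more than $\lambda_1$ boxes, so the top-row slide can itself lose boxes, and one must track carefully how $\mu$ and the shape of $B$ transform under the split rectification. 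This step runs exactly parallel to A.~Buch's treatment of the non-shifted Grassmannian case \cite{Buch:KLR}, and as an alternative one could transport that statement across using the doubling Lemma~\ref{lem:double}.
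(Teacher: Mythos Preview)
Your reduction to $\lambda_1=n$ has a genuine gap. The symmetry $N(\lambda,\mu)=N(\mu,\lambda)$ via the $K{\tt infusion}$ involution and Theorem~\ref{thm:main} is correct, but the observation that ``exactly one of $\lambda,\lambda^\vee$ has largest part $n$'' only helps when $\mu=\lambda^\vee$. For arbitrary $\mu$ it can happen that neither $\lambda_1$ nor $\mu_1$ equals $n$ (e.g.\ $n=4$, $\lambda=(3,1)$, $\mu=(2)$), and then no swap produces $\lambda_1=n$. Without that hypothesis your induction cannot start: you need $A$ to rectify to the \emph{full} top row $S_{(n)}$ so that $B$ descends to the staircase of size $n-1$. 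Separately, even when $\lambda_1=n$, the step you label ``bookkeeping'' --- pinning down the inner shape of $B$ in the smaller staircase and ruling out $|A|>\lambda_1$ --- is the substance of the lemma and is not actually carried out.

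The paper proceeds differently and supplies exactly the missing control. It does not reduce to $\lambda_1=n$. For any $T\in{\tt INC}(\Lambda/\mu)$ rectifying to $S_\lambda$, it applies Lemma~\ref{lem:double} and Theorem~\ref{thm:sixone}: the bottom row of $\widetilde T$ is an increasing sequence whose length equals that of the last column of $\Lambda/\mu$, namely $(\mu^\vee)_1$, so $\lambda_1=\LIS(\widetilde T)\ge(\mu^\vee)_1$. In the other direction, the $\lambda_1$-Pieri subtableau $T\setminus\overline T$ is a ribbon reaching the diagonal, and a direct count in $\Lambda$ bounds its length by $(\mu^\vee)_1$; hence $\lambda_1=(\mu^\vee)_1$ and $|T\setminus\overline T|=\lambda_1$. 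Combined with the inductive hypothesis applied to $\overline T$ (which rectifies to $S_{\overline\lambda}$), this yields $|T|=|\lambda|$: no boxes are lost under rectification, and the statement collapses to the cohomological Poincar\'e duality $C^\Lambda_{\lambda,\lambda^\vee}=1$. Note that this doubling/$\LIS$ step is precisely what would also patch your missing case $\lambda_1,\mu_1<n$ (there $(\mu^\vee)_1=n$, forcing $\lambda_1\ge n$, a contradiction), so the paper's key tool is not genuinely bypassed by your outline. Your proposed alternative of transporting Buch's Grassmannian result through Lemma~\ref{lem:double} faces the obstacle that $\widetilde{S_\lambda}$ is not superstandard in the unshifted sense, so the non-shifted uniqueness statement does not apply to it directly.
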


\begin{proof} The proof is by induction on $n$, the size of the largest part of
$\Lambda$.

Let $T$ be a filling of $\Lambda/\mu$ that rectifies to $S_\lambda$.
Write ${\overline \lambda}=(\lambda_2,\dots)$, which is the shape obtained
by removing the first row from $\lambda$.  Let ${\overline T}$ be the entries of
$T$ which are greater than $\lambda_1$.  By similar reasoning as in the proof of
Theorem~\ref{thm:main}, ${\overline T}$ necessarily rectifies to
$S_{{\overline \lambda}}$.  By induction, ${\overline T}$ must be of shape $\Lambda/{\overline \lambda}^\vee$.

Consider $\widetilde T$ contained in $\widetilde \Lambda$.  By Lemma \ref{lem:double}, $\widetilde T$ rectifies to $\widetilde{S_\lambda}$.
The entries in the bottom
row of $\widetilde T$ form an increasing sequence, so by
Theorem \ref{thm:sixone}, ${\tt
LIS}(\widetilde T)$ is at least the length of this row, whose
length is the same as that of the final column of $\Lambda/\mu$ (which equals
$(\mu^\vee)_1$).  Thus, we have that $\lambda_1 \geq (\mu^\vee)_1$.

At the same time, any ribbon
from the $(\mu^\vee)_1$-th box of the
rightmost column of $\Lambda$ (counting up from the bottom) to a box on
the diagonal of $\Lambda$, has length $(\mu^\vee)_1$.  However,
by construction, the
boxes of
$T\setminus {\overline T}$ form a $\lambda_1$-Pieri filling, and thus include at
least $\lambda_1$ boxes. Since, as already noted,
any Pieri filling must be a ribbon, it follows that $\lambda_1\leq |T\setminus {\overline T}|\leq (\mu^\vee)_1$.

Therefore $\lambda_1=(\mu^\vee)_1$, and hence
$|T\setminus{\overline T}|=\lambda_1$.
It follows that
\[|T|=|T\setminus{\overline T}|+|{\overline T}|=\lambda_1+|{\overline \lambda}|=|\lambda|=|S_\lambda|,\]
and by Poincar\'{e} duality considerations,
the filling $T$ must be the unique one of shape $\Lambda/\lambda^{\vee}$ that witnesses
the cohomological Littlewood-Richardson coefficient $C_{\lambda^{\vee},\lambda}^{\Lambda}=1$.
\end{proof}

We now have the following fact, which was also obtained earlier by \cite{Bu},
in a manner not relying on Lemma~\ref{technical}:

\begin{Lemma}\label{idealform}
\begin{equation}
\label{eqn:pairing}
[\partial X_{\lambda}]=(1-[{\mathcal O}_{(1)}])[{\mathcal O}_{X_{\lambda}}].
\end{equation}
\end{Lemma}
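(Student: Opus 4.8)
The plan is to deduce \eqref{eqn:pairing} from Lemma~\ref{technical} together with the standard duality between the two bases of $K^0(X)$, so that (beyond Lemma~\ref{technical}) essentially no new geometry is needed. The starting observation is that $OG(n,2n+1)$ has a \emph{unique} Schubert divisor, namely $X_{(1)}$: equivalently $X_{(1)}=X\setminus X_\emptyset^\circ=\partial X_\emptyset$, so that $1-[\mathcal O_{X_{(1)}}]=[\mathcal O_{X_\emptyset}]-[\mathcal O_{\partial X_\emptyset}]=[\partial X_\emptyset]$. (Since $X_{(1)}$ is a Cartier divisor on the smooth variety $X$, this class is that of the line bundle $\mathcal O_X(-X_{(1)})$, so multiplication by it is an automorphism of $K^0(X)$ — this is what makes $\{[\partial X_\lambda]\}$ and $\{[\mathcal O_{X_\lambda}]\}$ interchangeable in the manner required for Corollary~\ref{cor:main3}.) Thus \eqref{eqn:pairing} is the assertion $[\partial X_\lambda]=[\partial X_\emptyset]\cdot[\mathcal O_{X_\lambda}]$.

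Next I would invoke the duality between the $\mathbb{Z}$-bases $\{[\mathcal O_{X_\lambda}]\}$ and $\{[\partial X_\lambda]\}$ of $K^0(X)$, which holds for any $G/P$ (Brion; for the Grassmannian version see the treatment in \cite{TY8}, and cf.~\cite{Bu}): the Euler-characteristic pairing $\langle\alpha,\beta\rangle:=\chi(X,\alpha\cdot\beta)$ is nondegenerate and satisfies $\langle[\mathcal O_{X_\nu}],[\partial X_\mu]\rangle=\delta_{\nu,\mu^\vee}$ for all shapes, where $\vee$ is the complementation of shifted shapes in $\Lambda$ of Section~5. Granting this, it suffices to check, for each shape $\mu$, that $\langle[\partial X_\lambda],[\mathcal O_{X_\mu}]\rangle=\langle[\partial X_\emptyset]\cdot[\mathcal O_{X_\lambda}],[\mathcal O_{X_\mu}]\rangle$. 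The left side is $\delta_{\mu,\lambda^\vee}$ directly from the duality. For the right side, I would expand $[\mathcal O_{X_\lambda}][\mathcal O_{X_\mu}]=\sum_\nu C_{\lambda\mu}^\nu[\mathcal O_{X_\nu}]$ and apply the duality once more, using $\langle[\partial X_\emptyset],[\mathcal O_{X_\nu}]\rangle=\delta_{\nu,\emptyset^\vee}=\delta_{\nu,\Lambda}$, to get $\sum_\nu C_{\lambda\mu}^\nu\delta_{\nu,\Lambda}=C_{\lambda\mu}^\Lambda$. By Lemma~\ref{technical} this equals $\delta_{\mu,\lambda^\vee}$, so the two sides agree and \eqref{eqn:pairing} follows.

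The only non-formal input here is Lemma~\ref{technical} (already proved above); the rest is the general $K$-theoretic Schubert duality plus the trivial identification $X_{(1)}=\partial X_\emptyset$. The step I expect to need the most care is matching the combinatorial involution $\lambda\mapsto\lambda^\vee$ of Section~5 with the one that appears in the geometric duality statement: both are ``complementation in $\Lambda$'' (equivalently the $w_0$-twist on Schubert classes), and I would pin down their agreement either by comparison with the classical cohomological Poincar\'e pairing on $H^*(X)$, or self-containedly from Lemma~\ref{technical} itself. Finally I would remark that \cite{Bu} obtains \eqref{eqn:pairing} by a different, purely geometric route — realizing $\partial X_\lambda$ as (the zero scheme of a section of) $\mathcal O_{X_\lambda}(1)$ — which avoids Lemma~\ref{technical}; the argument sketched above is the one consistent with the present paper's line of reasoning.
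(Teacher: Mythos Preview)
Your proposal is correct and follows essentially the same approach as the paper: invoke Brion's duality $\langle[\mathcal O_{X_\nu}],[\partial X_{\mu}]\rangle=\delta_{\nu,\mu^\vee}$, then verify via Lemma~\ref{technical} that the right-hand side of \eqref{eqn:pairing} pairs the same way against every $[\mathcal O_{X_\mu}]$, so by uniqueness of dual bases the identity holds. Your explicit identification $1-[\mathcal O_{(1)}]=[\partial X_\emptyset]$ and the unpacked pairing computation $\langle(1-[\mathcal O_{(1)}])[\mathcal O_{X_\lambda}],[\mathcal O_{X_\mu}]\rangle=C_{\lambda\mu}^\Lambda=\delta_{\mu,\lambda^\vee}$ simply flesh out what the paper compresses into ``as in \cite[Section~8]{Buch:KLR}, this follows from Lemma~\ref{technical}.''
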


\begin{proof}
To see this, consider the map $\rho:X\to \{pt\}$.
Its pushforward
$\rho_{\star}:K^{0}(X)\to K^{0}(pt)\cong {\mathbb Z}$ allows one to define the pairing
$(\cdot,\cdot):K^{0}(X)\times K^{0}(X)\to {\mathbb Z}$, by $(p,q)=\rho_{\star}(p\cdot q)$.
With respect to
the above pairing,
$\{[{\mathcal O}_{X_{\lambda}}]\}$ and $\{[\partial X_{\lambda^{\vee}}]\}$ are
dual bases
\cite{Bri}.
Hence it suffices to check that the righthand-side of (\ref{eqn:pairing})
defines a dual basis to $\{[{\mathcal O}_{X_{\lambda}}]\}$. Briefly, as in the proof
of the analogous Grassmannian result from \cite[Section 8]{Buch:KLR}, this
follows from Lemma \ref{technical}.
\end{proof}

Given Lemma~\ref{idealform}, the proof of Corollary \ref{cor:main3}
follows the same argument, \emph{mutatis mutandis}, as in \cite[Theorem~1.6]{TY8}.\qed

\section*{Acknowledgements}
We thank Anders Buch for informing us of \cite{Bu} and the work of
Feigenbaum and Sergel.
We also thank Allen Knutson for a helpful communication concerning dual Schubert
bases in $K$-theory. HT is supported by an NSERC Discovery grant.
AY is partially supported by NSF grants DMS-0601010 and DMS-0901331.

\end{document}